\newtheorem{defi}{Definition}
\newtheorem{prop}[defi]{Proposition} 
\newtheorem{lemma}[defi]{Lemma} 
\newtheorem{teo}[defi]{Theorem}
\newtheorem{rmk}[defi]{Remark}
\newtheorem*{teo*}{Theorem}
\newcommand{\0}{\mathcal{O}}    
\newcommand{\Spf}{\mathrm{Spf}}
\newcommand{\Sp}{\mathrm{Spec}}
\newcommand{\cO}{\mathcal{O}}
\newcommand{\lra}{\longrightarrow}
\begin{document}
\title{\textsc{On a $p$-adic invariant cycles theorem}}
\author{ B. Chiarellotto \and R. Coleman \and V. Di Proietto \and A. Iovita}

\maketitle

\textsc{Abstract -} \begin{small} For a proper semistable curve $X$ over a DVR of mixed characteristics we reprove 
the "invariant cycles theorem'' with trivial coefficients (see  \cite{Ch99}) i.e.
that the group of elements annihilated by the  monodromy operator on the  first de Rham cohomology group of the generic 
fiber of 
$X$ coincides with the 
first rigid cohomology group of its special fiber, without the hypothesis that the residue field of $\cal V$ is finite.   
This is done 
using the explicit description of the monodromy operator on the de Rham cohomology of the generic fiber of $X$ 
with coefficients convergent $F$-isocrystals given  in \cite{CoIo10}. We apply these  
ideas to the case  where the coefficients are unipotent convergent   $F$-isocrystals defined on the special fiber (without 
log-structure): we show that the invariant cycles theorem does not  hold in general in this setting. Moreover we 
give a sufficient condition for the non exactness.
\end{small}
\bigskip
{\bf Keywords}:  Monodromy, $p$-adic cohomology. 11C30(11F80)
\bigskip
%{}
\section{Introduction}
\label{sec:intro}
Let $\cal V$ be a complete discrete valuation ring of mixed characteristics,  $K$  its fraction field and  $k$ 
the 
residue field, which we assume to be perfect. Let $W:=W(k)$ denote the ring of Witt-vectors with 
coefficients in $k$ seen as a subring of ${\cal V} $ and let $K_0$ denote its fraction field. 

For a proper variety $X$ over $\cal V$ with semistable reduction and 
special fiber 
$X_k$, via the  theory of log schemes and the  work of Hyodo-Kato one defines  a monodromy operator  on the de Rham comology 
groups of its generic fiber $X_K$. It has been known for some time now that associated to this operator there is an 
analogue of the classical invariant cycles  sequence \cite{Ch99}
$$
H^i_{\mathrm{rig}}(X_k)\otimes_{K_0} K \rightarrow H^i_{\mathrm{dR}}(X_K) \rightarrow   H^i_{\mathrm{dR}}(X_K).
$$
The exactness of such a sequence is implied by the weight-monodromy conjecture  \cite{Ch99} if the residue field $k$ is finite. 
Hence the above invariant cycles  sequence is exact if    $X$ is a curve or a surface (which are the cases in which the 
weight-monodromy conjecture is known) and in this case the first map is 
even  injective if $i=1$.  I.e. the following sequence is exact:

\begin{equation} \label{CM}
0 \rightarrow H^1_{\mathrm{rig}}(X_k)\otimes_{K_0}K \rightarrow H^1_{\mathrm{dR}}(X_K) \rightarrow   
H^1_{\mathrm{dR}}(X_K).
\end{equation}
In these cases (i.e. in the cases in which the sequence (\ref{CM}) is exact)
we obtain an interpretation  of the part of the de Rham cohomology which is  annihilated by  the  monodromy 
operator: it is the rigid cohomology group of the special fiber.  On the other hand the same exact sequence gives 
us an interpretation \`a la Fontaine of the 
first rigid cohomology group, in fact we can translate the exactness  as 
follows: since  
$$D_\mathrm{st}(H^1_\text{\rm \'et}(X_K\times \overline{K}),\mathbb{Q}_p)=H^1
_\mathrm{log-crys}(X_k)\otimes K$$
$$D_\mathrm{st}^{N=0}=D_\mathrm{crys}, $$
then 
$$H^1_\mathrm{rig}(X_k)=D_\mathrm{crys}(H^1_\text{\rm \'et}(X_K\times \overline{K}),\mathbb{Q}_p)$$ 

\bigskip
\noindent
In \cite{CoIo10} another definition  of a monodromy operator  was given in the case $X$ is a 
curve with semistable reduction using the combinatorics of the curve together with the use of the analytic spaces 
associated to the generic fiber. 
The authors also considered the case of cohomology with coefficients and generalized the definition of the monodromy 
operator 
on the de Rham cohomology with coefficients  non trivial log-$F$- isocrystals and they showed that  it coincides with the 
previous definition given by Faltings \cite{Fa}.  Using this definition of the monodromy operator we are able (see \S5) to 
re-prove the exactness of the invariant cycles sequence (\ref{CM}) without any hypothesis on the finiteness of the 
residue field.  
It is then natural to ask if such an invariant cycles sequence (\ref{CM}) is still exact  when the log-$F$-isocrystals are induced from  convergent $F$-isocrystals on the special fiber.   This is one of themotivations of the present article. 
As a matter of fact,   
the invariant cycles sequence (\ref{CM})  involves the trivial  convergent $F$-isocrystal  on the special fiber of 
$X$ and its 
rigid cohomology.
%  and sees it  as log-$F$-isocrystal on the special fiber endowed with the log structure coming from the semistability, 
%hence with his log-crystalline cohomology  tensored by $\Q$ (which is the de Rham cohomology of the generic fiber for the 
%trivial coefficients \cite{H-K}). 
Hence we start with  coefficients which a priori do not have singularities being convergent on the 
special fiber without any log structure.  But, even for the  
%most natural generalization   
simplest non-trivial coefficients on a curve  (i.e. 
the unipotent ones) the sequence   fails sometimes to be exact and we give  a sufficient condition 
(see Theorem \ref{VIP}). Underlying our work, of course, is the aim of giving a  cohomological interpretation 
for the part of the cohomology on which the monodromy operator acts as zero.

Of course the invariant cycles theorem can be studied also in the $\ell$-adic
and respectively the complex settings, where it is known for semi-simple perverse sheaves or ${\cal D}$-modules
of geometric origin and it follows from the decomposition theorem (\cite{BDD} corollaire 6.2.8, \cite{Mo} 
theorem 19.47,
\cite{Sa}, theorem 1, \cite{MiDe}, theorem section 1.7).
Our $p$-adic setting deals with unipotent, non-trivial coefficients, which are therefore not semi-simple.
We did not find any evidence of a similar result for reducible coefficients in the $\ell$-adic or complex 
settings, although we believe that such results should hold. 

Here  it is the plan of our article. In \S2 we introduce   notation and recall results on rigid spaces which will be used 
in the article, in the third  paragraph we  recall  some properties of the monodromy operator  on the de Rham 
cohomology with 
coefficients on a curve 
as introduced by Coleman and Iovita and of the associated invariant cycles sequence.  In \S4  we  give some 
properties of such a 
monodromy operator: in particular for general  convergent $F$-isocrystals  we  prove  that the  rigid cohomology of the  
convergent $F$-isocrystal injects on the  part of the de Rham cohomology  of the associated  log-$F$ isocrystal 
where the monodromy 
acts as zero.  In \S5, we then  re-prove  (\cite{Ch99})  the invariant cycles theorem for trivial coefficients 
in a combinatorial way 
along the lines of the work in  \cite{CoIo10}.  In  \S6 we study the  invariant cycles sequence   for unipotent  
convergent  
$F$-isocrystals  and  we prove a sufficient conditions for the non exactness of the sequence.  
Finally we give an explicit example of this on a Tate curve. 

$$ \ast \ast$$
We thank Benoit Larose for suggestions and helpful discussions on graph theory. We also thank Thomas Zink for 
pointing out to 
one of us that it would be interesting to study a Fontaine type interpretation of the rigid cohomology of a 
smooth proper variety in characteristic $p$. Thanks are also due to Claude Sabbah for interesting 
email exchanges on questions pertaining to this research. The first author was supported by the Cariparo Eccellenza Grant 
"Differential Methods in Arithmetic, Geometry and Algebra". The third author was was supported by a postdoctoral fellowship and kaken-hi (grant-in-aid) of the Japanese Society for the Promotion of Science (JSPS) while working on this paper at the University of Tokyo.

%\section{The setting}
%Let $V$ be a complete discrete valuation ring of mixed characteristic $(0,p)$ with uniformizer $\pi$, let $K$ be its fraction field and let $k$ be the residue field, assumed .to be perfect. We denote by $W(k)$ the ring of Witt vectors of $k$ and by $K_0$ its fraction field.\\ Let $X$ be a projective curve over $V$ that is semistable, which means that locally for the Zariski topology there is an \'etale map to $\Sp(V[x,y]/xy-\pi)$ and we suppose that the special fiber, union of smooth irreducible components, has at least two components. We denote by $X_k$ the special fiber of $X$, that we suppose connected, and by $X_K$ its generic fiber.\\
%We consider the category of overconvergent $F$-isocrystals on $X_k$. Being $X$ projective, we can embed $X_k$ in $P$ (projective space of some dimension) such that it is formally smooth over $V$. Every object in the category of overconvergent $F$-isocrystal can be represented by a module with integrable connection on $]X_k[_P$. Moreover we have a closed immersion of $X_K^{\mathrm{rig}}$ into $]X_k[_P$, so every overconvergent $F$-isocrystal induces a module with integrable connection on $X_K^{\mathrm{rig}}$ by pullback. Hence given an overconvergent $F$-isocrystal $\mathcal{E}$ and the module with connection $(E, \nabla_E)$ induced on $X^{\mathrm{rig}}_K$ we want to study the relation between $H^1_{\mathrm{rig}}(X_k, \mathcal{E})$ and $H^1_{\mathrm{dR}}(X_K^{\mathrm{rig}}, (E, \nabla_{E})).$ \\ 
%
%

\section{Notation and Settings.  A Mayer-Vietoris exact sequence}\label{A Mayer-Vietoris exact sequence}

%Let $\cal V$ be a complete discrete valuation ring of mixed characteristics $(0,p)$ with uniformizer $\pi$, let $K$ be its 
%fraction field and  $k$ its residue field, assumed to be perfect. Let $W:=W(k)$ denote the ring of Witt-vectores with 
%coefficients in $k$, seen as a subring of $\cV$ and let $K_0$ denote its fraction field.\\

We assume the notations in section \ref{sec:intro}.
Let $X$ be a proper curve over $\cal V$ (of mixed characteristic) that is semistable, which means that locally for the Zariski topology there is an \'etale map to $\Sp(\mathcal{V}[x,y]/xy-\pi)$ and we suppose that the special fiber, union of smooth irreducible components, has at 
least two components. 
We denote by $X_k$ the special fiber of $X$ which we suppose connected, by $X_K$ its generic fiber and by 
$X^{\mathrm{rig}}_K$ the 
rigid analytic generic fiber. By theorem 2.8 in \cite{lichtenbaum} $X$ being a proper, regular curve over $\cal{V}$ is in fact
a projective $\cal{V}$-scheme.\\

Following \cite{CoIo99} we  associate to $X_k$ a graph $Gr(X_k)$ whose definition we now recall. 
%The graph $Gr(X_k)$ will be constructed as follows: 
To every irreducible component $C_v$ of $X_k$ we associate a vertex $v$ and if $v,w$ 
are vertices, an oriented 
edge $e=[v,w]$ with origin $v$ and end $w$  corresponds to an intersection point $C_e$ of the components $C_v$ and $C_w$. 
We denote by $\mathscr{V}$ the set 
of vertices and by $\mathscr{E}$ the set of oriented edges. \\
We have the specialization map  
$$\mathrm{sp}:X_K^{\mathrm{rig}}\rightarrow X_k$$
defined  in \cite{Be}.\\
For every $v \in \mathscr{V}$ we define 
$$X_v:=sp^{-1}(C_v)$$
and for every $e \in\mathscr{E}$
$$X_e:=sp^{-1}(C_e).$$
The set $X_e$ is an open annulus in $X_K^{\mathrm{rig}}$ and $X_v$ is what is called a wide open subspace in
(\cite{Co89} proposition 3.3), that means an open of $X^{\mathrm{rig}}_K$ isomorphic to 
%$X^{\mathrm{rig}}_K$ minus a finite number of 
the complement of a finite number of closed disks, each contained in a residue class, in a smooth proper curve 
over $K$ with good reduction. If $C_v$ and $C_w$ intersect in $C_e$, then $X_v\cap X_w=X_e.$ \\
One can prove that $\{X_v\}_{v\in \mathscr{V}}$ is an admissible covering of $X^{\mathrm{rig}}_K$ (\cite{Co89}) and that wide 
opens 
are Stein spaces so that we can use the covering $\{X_v\}_{v\in \mathscr{V}}$ to calculate the de Rham cohomology of 
$X_K^{\mathrm{rig}}$ using a \v{C}ech complex.
%Every wide open can be written as union of an affinoid (inverse image via the specialization map of an affine of the special fiiber) and a finite number of open annuli. So to prove that every wide open is Stein it is sufficient to prove that every open annulus can be cover by an increasing sequence of affinoids, that is clear. 
%Every wide open can be covered by an increasing family of affinoids \cite{Co89} corollary 3.5 b, that are Stein spaces \cite{Bo} lemma 6 pag 93 (but shold be Kihel's theorem); hence to calculate the cohomology of a coherent sheaf we can use a \v{C}ech resolution given by the increasing affinoid cavering. The fact that it is increasing implies that they are Stein spaces.\\
Moreover one can prove that the first de Rham cohomology of a wide open is finite (\cite{Co89} theorem 4.2) proving a comparison theorem with the de Rham cohomology of an algebraic curve minus a finite set of points. % (but the theorem is over $\mathbb{C}_p$ even it seems that it is not used).\\
%\begin{ese}
%If $D$ is the closed disk in $\mathbb{P}_{K}^{1,rig}$, then $H^1_{\mathrm{dR}}(D)$ is an $\infty$ dimensional vector space. In fac%t the 
%element $\sum_{n}p^nt^{p^n-1}dt$ $\in$ $K\{t\}$ but it does not belongs to $dK\{ t \}$. And one can build infinitely many examples like this
%which moreover have the property that their images modulo $dK\{t\}$ are $K$-linearly independent.\\
%On can prove that if $D^-$ is the open disk of radius $1$, then $H^1_{\mathrm{dR}}(D^-)=0$, (if $|a_n|\epsilon^n \rightarrow 0$ for every $\epsilon<1$, then $|a_n|\frac{\epsilon^{n+1}}{|n+1|}\rightarrow 0$ for every $\epsilon <1$).\\
%If $A$ is an oriented open annulus, with (oriented)  parameter $z$, then $H^1_{\mathrm{dR}}(A)\cong\left[ \frac{dz}{z}\right]K\cong 
%H^0_{\mathrm{dR}}(A)$ and the  isomorphism is given by the residue of differential forms.\\
%\end{ese}
Let $(\mathcal{E}, \nabla)$ be a module with integrable connection on $X_K^{\mathrm{rig}}$.\\
Given the admissible covering $\{X_v\}_{v\in \mathscr{V}}$ that is such that its elements intersect only two by two, we can write the Mayer-Vietoris sequence:
\begin{equation}\label{M-V}
\xymatrix{
\oplus_{v\in \mathscr{V}}H^0_{\mathrm{dR}}(X_v, (\mathcal{E}, \nabla))\ar[r]^{\alpha}&\oplus_{e\in\mathscr{E}}H^0_{\mathrm{dR}}(X_e, (\mathcal{E}, \nabla))\ar[r] &H^1_{\mathrm{dR}}(X_K^{\mathrm{rig}}, (\mathcal{E}, \nabla))\ar `r[d] `[l] `[lld] `[dl] [dl] \\ & \oplus_{v\in \mathscr{V}}H^1_{\mathrm{dR}}(X_v, (\mathcal{E}, \nabla))\ar[r]^{\beta}&\oplus_{e\in\mathscr{E}}H^1_{\mathrm{dR}}(X_e, (\mathcal{E}, \nabla)).
}
\end{equation}

Let us remark that every cohomology group that appears in the long exact sequence except for $H^1_{\mathrm{dR}}(X^{\mathrm{rig}}_K, (\mathcal{E}, \nabla))$ can be calculated as the cohomology of the global sections of the de Rham complex, due to the fact that every wide 
open is Stein.\\
From the equation (\ref{M-V}) we can deduce the short exact sequence
\begin{equation}\label{M-V2}
\xymatrix{
0\ar[r]& H^1(Gr(X_k), \mathcal{E})\ar[r]^{\gamma} &H^1_{\mathrm{dR}}(X_K^{\mathrm{rig}}, (\mathcal{E}, \nabla)))\ar[r]&\mathrm{Ker}(\beta)\ar[r]&0.
}
\end{equation}
where $H^1(Gr(X_k), \mathcal{E}):=\mathrm{Coker}(\alpha)$.

 \section{The monodromy operator and the rigid cohomology}\label{Monodromy operator and rigid cohomology}

We consider again a proper and semistable curve $X$, its generic fiber  $X_K$ and its associated  rigid space 
$X_K^{\mathrm{rig}}$.  We 
recall the construction of the monodromy operator in  \cite{CoIo10} section 2.2. \\
By  our assumptions there is a proper scheme $P$ over $W$, smooth around $X_k$ and such that
we have a global embedding $X\hookrightarrow P\times_{\rm Spec(W)}{\rm Spec}({\cal V})=P_{\cal V}$.  
%$X$  can be embedded (globally) in a proper $P$, a scheme over $\cal V$ which is smooth around 
Let us denote by 
$P_k$ its special fiber and by $P_{K_0}^{\mathrm{rig}}$ and $P_{K}^{\mathrm{rig}}$ the rigid analytic spaces associated to 
$P$ and $P_ {\cal V}$; then one has the
%{} 
following diagram:
\begin{equation*} 
\xymatrix{ 
 & &P_{K_0} ^{\mathrm{rig}}\ar[d]\ar[dl]_{\mathrm{sp}_P} \\
X_k \ar[r] &P_k \ar[r]&P}
 \end{equation*}
where the map between $P_{K_0} ^{\mathrm{rig}}$ and $P_k$ is the specialization map that we denote by 
$\mathrm{sp}_P$. We also have a specialization map $\mathrm{sp}_{P_{\cal V} }:P_K^{\mathrm{rig} } \lra P_k$.  One can 
consider the tubes $\mathrm{sp}_P^{-1}(X_k):=  ]X_k[_P$ and   $Y_K:=\mathrm{sp}_{P_{\cal V} } ^{-1}(X_k)=]X_k[_{P_{\cal V} } $. 
Let now $E$ be a convergent $F$-isocrystal on $X_k$.  
It has a realization on 
$]X_k[_P$: $( \mathcal{E}, \nabla)$ and we denote by 
$(\mathcal{E}, \nabla)_K$ its base change to $K$. 
It is a module with connection on $Y_K$.
We will denote by the same symbol its restriction to $X_K^{\mathrm{rig}}$.  We may then 
define   the first rigid cohomology group with coefficients in $E$ as 
$$H^1_{\mathrm{rig}}(X_k, {E}):=H^1_{\mathrm{dR}}(]X_k[_P, (\mathcal{E}, \nabla)),$$ which is a finite dimensional 
$K_0$-vector space. We also consider 
$$
H^1_{\mathrm{rig}}(X_k, {E})_K:=H^1_{\mathrm{dR}}(]X_k[_{P_{\cal V} } , (\mathcal{E}, \nabla)_K)=H^1_{\mathrm{dR} }(Y_K, (\mathcal{E},
\nabla)_K).  
$$

On the other hand we can proceed as before and take $X_K^{\mathrm{rig}}$ as the rigid analytic space associated to $X_K$, 
we then have 
%to $\hat{X}$ and complete the above diagram as follows:
%\begin{equation*} 
%\xymatrix{ 
 %X_K^{\mathrm{rig}}\ar[d]^{\mathrm{sp}_X}& &P_K^{\mathrm{rig}}\ar[d]\ar[dl] \\
%X_k \ar[r]&P_k \ar[r]&P}
 %\end{equation*}
%where the arrow $\mathrm{sp}_{X}:X_K^{\mathrm{rig}}\rightarrow X_k$ is the specialization map as defined in section \ref{A Mayer-Vietoris exact sequence}. We have a map
$$\varphi: X_K^{\mathrm{rig}}\longrightarrow Y_K$$ given by the immersion of $X$ into $P_{\cal V} $ that induces a map 
in cohomology 
\begin{equation} \label{IN}
\varphi^*: H^1_{\mathrm{rig}}(X_k, E)_K:=H^1_{\mathrm{dR}}(Y_K, (\mathcal{E}, \nabla)_K)\longrightarrow 
H^1_{\mathrm{dR}}(X^{\mathrm{rig}}_K, (\mathcal{E},\nabla)_K).
\end{equation}

In  the notations above  we define following \cite{CoIo10} a $K$-linear map $$N:H^1_{\mathrm{dR}}(X_K^{\mathrm{rig}}, 
(\mathcal{E}, \nabla)_K)\rightarrow H^1_{\mathrm{dR}}(X_K^{\mathrm{rig}}, (\mathcal{E},\nabla)_K).$$
Due to the fact that wide opens are Stein spaces, every element $[\omega]$ in $H^1_{\mathrm{dR}}(X_K, (\mathcal{E}, \nabla)_K)$ 
can be described as a hypercocycle $((\omega_v)_{v\in \mathscr{V}}, (f_e)_{e\in\mathscr{E}})$, with $(\omega_v)$ in $\Omega^1_{X_v}\otimes \mathcal{E}_{X_v} $ and $f_e$ in $\mathcal{E}_{X_e}$ that verifies that $\omega_{v|X_e}-\omega_{w|X_e}=\nabla (f_e)$ if $e=[v,w]$.\\
Let us remember that every $X_e$ is an ordered open annulus; we can define a residue map
$$\mathrm{Res}:H^1_{\mathrm{dR}}(X_e, (\mathcal{E}, \nabla)_K)\rightarrow H^0_{\mathrm{dR}}(X_e, (\mathcal{E}, \nabla)_K)$$
as follows. The module with connection $(\mathcal{E}, \nabla)_K$ has a basis of horizontal sections 
$e_1, \dots, e_n$ on $X_e$ because $X_e$ is a residue class (lemma 2.2 of \cite{CoIo10}). Hence if $z$ is an ordered 
uniformizer of the ordered 
annulus $X_e$ every differential form $\mu_e \in H^1_{\mathrm{dR}}(X_e, (\mathcal{E}, \nabla)_K)$ can be written as 
$\mu_e=\sum_{i=1}^n(e_i\otimes \sum_ja_{i,j} z^jdz)$ with $a_{i,j}\in K$. Then $\mathrm{Res}(\mu_e)=\sum_{i=1}^na_{i,-1}e_{i}$, 
and it is an isomorphism of vector spaces. 

For a cohomology class $[\omega]$ represented as before by $((\omega_v)_{v\in \mathscr{V}}, (f_e)_{e\in\mathscr{E}})$ we define 
$N$ as the composition of the following maps:
$$\tilde{N}: H^1_{\mathrm{dR}}(X^{\mathrm{rig}}_K, (\mathcal{E}, \nabla)_K)\longrightarrow  \oplus_{e\in\mathscr{E}}
H^0_{\mathrm{dR}}(X_e, (\mathcal{E}, \nabla)_K))$$ 
$$\tilde{N}:[\omega]\mapsto (\mathrm{Res}(\omega_{v|X_e})_{e=[v,w]})$$
and the map 

\begin{equation*}
\xymatrix{
i:\oplus_{e\in\mathscr{E}}H^0_{\mathrm{dR}}(X_e, (\mathcal{E}, \nabla)_K)\ar[r]& \oplus_{e\in\mathscr{E}}H^0_{\mathrm{dR}}
(X_e, (\mathcal{E}, \nabla)_K)/\oplus_{v\in \mathscr{V}}H^0_{\mathrm{dR}}(X_v, (\mathcal{E}, \nabla)_K)\ar[r]^{\,\,\,\,\,\,\,\;\;\;\;\;\;\;\;\;\;\;\;\;\;\;\;\;\;\;\;\;\;\;\gamma}&H^1_{\mathrm{dR}}(X_K^{\mathrm{rig}}, (\mathcal{E}, \nabla)_K) 
}
\end{equation*}
$$i:(f_{e})_{e\in\mathscr{E}}=\Big(0,f_e/\mathrm{Im}(\alpha)\Big)_{v \in \mathscr{V},e\in\mathscr{E}}$$
and $\gamma$ and $\alpha$ the same map as in (\ref{M-V}).\\
Hence $N$ is defined as $N=i \circ \tilde{N}$. Note that $N^2=0$.\\

\bigskip

In order to give an interpretation of the  monodromy operator on the de Rham cohomology defined above we'll introduce 
the log formalism.  The curve $X$ can be equipped with a log structure, associated to the special fiber $X_{k}$ which 
is a divisor 
with normal crossing and $\mathrm{Spec}(W)$ with the log structure given by the closed point. Pulling them back to 
$X_{k}$ and to $\mathrm{Spec}(k)$ respectively, we may consider $X_{k}$ and $\mathrm{Spec}(k)$ as log schemes, 
and when we want to 
treat them as log schemes we denote them by $X_{k}^{\times}$ and $\mathrm{Spec}(k)^{\times}.$ The log structure on 
$\mathrm{Spec}(W)$ induces a log structure on $\Spf( W)$, and again when we want to treat it as a log formal
scheme we denote 
it by  $\Spf(  W)^{\times}$.  We note  that  in the  case of the trivial isocrystal  by \cite{H-K} 
the de Rham cohomology 
groups of the generic fiber  coincide with the log-crystalline ones of $X_{k}^{\times}$, base-changed to $K$ .  
This result holds also in our case 
with coefficients. In fact if we start with a convergent $F$-isocrystal on  $X_k$, then one can associate to it a 
log-convergent $F$-isocrystal  on $X_{k}^{\times}$ and then a log(-crystalline) $F$-isocrystal on $X_{k}^{\times}$ 
((\cite{Sh1} theorem 5.3.1): we again denote it by $E$. 

\begin{prop}\label{propFalt}
In the previous hypothesis and notations  if we start with a convergent $F$-isocrystal $E$ on $X_k$ and we denote by 
$(\mathcal{E}, \nabla)$ its realization on $]X_k[_{P}$,  then the cohomology of the restriction  
$H^i_{\mathrm{dR}}(X^{\mathrm{rig}}_K, (\mathcal{E}, \nabla)_K)$ coincides with  the log-crystalline cohomology of the 
associated log-$F$-isocrystal on $X_{k}^{\times}$, $H^i_{\mathrm{log-crys}}(X_{k}^{\times}, E)\otimes_{K_0}  K$. The monodromy 
operators coincide as well. \end{prop}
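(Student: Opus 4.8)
The plan is to prove the statement in two independent halves: first the comparison of the cohomology groups themselves, and then the compatibility of the two monodromy operators. For the first half, I would invoke the results already cited in the excerpt. By Shiho's theorem (\cite{Sh1}, theorem 5.3.1), a convergent $F$-isocrystal $E$ on $X_k$ extends to a log-convergent, hence log-crystalline, $F$-isocrystal on $X_k^\times$; and by the log comparison theorem of Hyodo–Kato (\cite{H-K}), extended to the case with coefficients, the log-crystalline cohomology $H^i_{\mathrm{log-crys}}(X_k^\times, E)\otimes_{K_0} K$ is computed by the de Rham cohomology of the log-de Rham complex on a suitable (log-)smooth lift of $X_k^\times$. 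The scheme $P$ (smooth around $X_k$, with $X\hookrightarrow P_{\cal V}$) together with the log structure coming from $X_k$ provides exactly such a lift, and its tube $]X_k[_P = \mathrm{sp}_P^{-1}(X_k)$ carries the realization $(\mathcal{E},\nabla)$. So the left-hand side of the comparison, $H^i_{\mathrm{dR}}(X^{\mathrm{rig}}_K, (\mathcal{E},\nabla)_K)$, is by definition (as set up just before Proposition \ref{propFalt}) the rigid realization of that log-de Rham cohomology; one needs to check that restricting from the tube to $X_K^{\mathrm{rig}}$ and base-changing from $K_0$ to $K$ is a quasi-isomorphism, which follows from the fact that $X_K^{\mathrm{rig}}$ is a strict neighborhood of $]X_k[$ inside $]X_k[_{P_{\cal V}}$ and the Berthelot-style fibration/overconvergence arguments reduce this to the residue-disk level. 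This identification is essentially the content of \cite{CoIo10} in the coefficients case, and I would cite it rather than redo it.

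For the second half — the coincidence of the monodromy operators — the point is that there are a priori three descriptions in play: the operator $N = i\circ\tilde N$ defined combinatorially on the \v{C}ech/Mayer–Vietoris model via residues at the annuli $X_e$, the operator of Faltings \cite{Fa}, and the Hyodo–Kato monodromy $N_{\mathrm{HK}}$ on log-crystalline cohomology. The strategy is to go through Faltings' operator as the common middle term. The paper \cite{CoIo10} already proves that the combinatorially defined $N$ agrees with Faltings' monodromy operator on the de Rham cohomology with log-$F$-isocrystal coefficients; this is stated explicitly in the introduction of the present paper ("they showed that it coincides with the previous definition given by Faltings"). What remains is to match Faltings' operator with the Hyodo–Kato operator under the comparison isomorphism of the first half. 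In the trivial-coefficient case this is known (the monodromy on $H^i_{\mathrm{dR}}(X_K)$ coming from the log structure is the Hyodo–Kato $N$, and Faltings' construction reproduces it); with coefficients in a log-$F$-isocrystal induced from a convergent $F$-isocrystal, the same argument applies verbatim because the monodromy operator in all of these formalisms is built from the residue of the log-connection along the divisor $X_k$ — equivalently from the $d\log$ of a local equation of the special fiber — and the coefficient isocrystal contributes nothing new to that residue computation since its connection is regular (convergent, no extra log poles from $E$ itself).

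Concretely, the key steps in order are: (1) record Shiho's theorem to pass from the convergent $F$-isocrystal to the log-crystalline one; (2) apply the Hyodo–Kato comparison with coefficients to identify $H^i_{\mathrm{log-crys}}(X_k^\times,E)\otimes_{K_0}K$ with log-de Rham cohomology on the lift, and then with $H^i_{\mathrm{dR}}(X^{\mathrm{rig}}_K,(\mathcal{E},\nabla)_K)$ via the tube and the strict-neighborhood/overconvergence comparison; (3) invoke \cite{CoIo10} for $N = N_{\mathrm{Faltings}}$ on the de Rham side; (4) invoke the compatibility of Faltings' monodromy with the Hyodo–Kato monodromy under the comparison of step (2), reducing it to a local residue-along-the-divisor computation in which the coefficient connection plays no role. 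The main obstacle is step (2)–(4) in the presence of coefficients: one must be careful that the extension of the Hyodo–Kato isomorphism and of the Faltings construction to non-trivial log-$F$-isocrystal coefficients is available and respects the monodromy, and that the tube-level comparison is genuinely a quasi-isomorphism of complexes (not just an isomorphism on cohomology in low degree). I expect that the cleanest route is to carry out the monodromy comparison on the level of the explicit local models — a punctured disk / annulus with its canonical log structure, on which both $N$'s are literally "take the residue of $\nabla$ at the puncture" — and then globalize via the admissible covering $\{X_v\}$, since the whole construction of $N$ in \cite{CoIo10} is local on the annuli $X_e$ anyway.
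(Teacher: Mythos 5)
Your overall architecture (Shiho to pass from convergent to log-crystalline coefficients, a Hyodo--Kato type comparison for the groups, and \cite{CoIo10} for the coincidence of the monodromy operators) matches the paper's intent, and the monodromy half is handled exactly as the paper handles it, namely by citation to \cite{CoIo10}. But there is a genuine gap in your step (2), and it sits precisely where the paper's (admittedly terse) proof places its only real content. First, the claim that $X_K^{\mathrm{rig}}$ is a strict neighborhood of $]X_k[$ inside $]X_k[_{P_{\mathcal{V}}}$ is false: $X_K^{\mathrm{rig}}$ is a closed, one-dimensional analytic subvariety of the tube $Y_K=]X_k[_{P_{\mathcal{V}}}$, which has the dimension of the ambient smooth $P$. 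Worse, the conclusion you want to draw from it --- that restriction from the tube to $X_K^{\mathrm{rig}}$ is a quasi-isomorphism --- cannot be right, because that restriction is exactly the map $\varphi^*$ of the paper, which on $H^1$ is injective with image $\mathrm{Ker}(N)$; if it were an isomorphism the entire invariant cycles problem studied in the paper would be vacuous. The comparison asserted in the proposition is not a restriction/overconvergence statement: it is a Hyodo--Kato/Faltings statement, and it necessarily passes through a deformation of $X_k^{\times}$ over a log-disk, because $H^i_{\mathrm{log-crys}}(X_k^{\times},E)$ lives over the log point while $H^i_{\mathrm{dR}}(X_K^{\mathrm{rig}},(\mathcal{E},\nabla)_K)$ is the fiber of that family at the point corresponding to $\pi$.

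The missing idea, which is the substance of the paper's proof, is the following: the relative log cohomology of that deformation is a Gauss--Manin differential system on the disk, and to identify its fiber at the log point with its fiber at $\pi$ one must trivialize (solve) this system. The Frobenius structure on $E$ is what makes this possible --- it forces the relative cohomology module to be locally free and forces the exponents of the Gauss--Manin system to be non-Liouville, so that Christol's transfer theorem \cite{christol} applies and trivializes the system; this is Faltings' argument \cite{Fa} carried over to the case with coefficients. Without isolating the role of Frobenius and the non-Liouville exponents, your step (2) has no mechanism for producing the isomorphism, and your closing remark that ``the coefficient isocrystal contributes nothing new'' is exactly the assertion that needs the Frobenius hypothesis in order to be true. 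In short: the skeleton of your argument is right and the citations are the right ones, but the one nontrivial analytic input is missing and the justification offered in its place is incorrect.
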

\begin{proof}
We  are in the case of \cite{Fa}. The Frobenius structure will imply that the relative  log cohomology arising 
from the 
deformation gives a locally free module,  but it will guarantee also that   the exponents of the associated  
Gauss-Manin differential system  are non-Liouville numbers: hence we may trivialize the system by the transfer theorem
\cite{christol}.  
For the coincidence of the monodromy operators we refer to \cite{CoIo10}.
%One can consider the first log crystalline cohomology with coefficients in $E$, denoted by $H^1{\mathrm{log-crys}}(X_k^*/\Spf(W)^*, E)$; thanks to \cite{Fa} paragraph 2 (see also \cite{CoIo10} paragraph 2.1) $H^1{\mathrm{log-crys}}(X_k^*/\Spf(W)^*, E)\otimes K\cong H^1_{\mathrm{dR}}(X^{\mathrm{rig}}_K, (\mathcal{E}, \nabla)).$ %Our convergent $F$-isocrystal will induce a (???) a  $F$-log-isocrystal on $X_k$ endowed with the log-structure coming from its semistability. Then one follow \cite{Fa} and the comparison results of \cite{CoIo10}.   
%Note that for the definition of the monodromy operator of \cite{Fa} using deformations it is required to have  Frobenius structure. It guarantees  that the exponents of the associated  the Gauss-Manin differential equation which comes from the deformations are non-Liouville numbers: hence we may trivialize the differential equation.
\end{proof}

Using $\varphi^*$  of (\ref{IN}) and  the monodromy operator $N$ we can form the following sequence
\begin{equation}\label{BC}
\xymatrix{
H^1_{\mathrm{dR}}(Y_K, (\mathcal{E}, \nabla)_K)\ar[r]^{\varphi^*}&H^1_{\mathrm{dR}}(X^{\mathrm{rig}}_K, (\mathcal{E}, \nabla)_K)\ar[r]^{N}&H^1_{\mathrm{dR}}(X^{\mathrm{rig}}_K, (\mathcal{E}, \nabla)_K).
}
\end{equation}
In \cite{Ch99} it is proven the following theorem when $k$ is finite and for varieties of dimensions 1 and 2  and 
$X_k$ projective (see also \cite{Na}).
\begin{teo}\label{teoBC}
In the sequence (\ref{BC}) if $E$ is  the trivial isocrystal, then the map $\varphi^*$ is injective and $\mathrm{Imm}( \varphi^*)=\mathrm{Ker} (N)$.
\end{teo}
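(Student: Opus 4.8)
The plan is to prove exactness by combinatorially unwinding the Mayer--Vietoris machinery of \S\ref{A Mayer-Vietoris exact sequence}, specialized to the trivial isocrystal $E=\cO$. In that case every $H^0_{\mathrm{dR}}(X_v,\cO)$ and $H^0_{\mathrm{dR}}(X_e,\cO)$ is just $K$, the map $\alpha$ in (\ref{M-V}) is the usual coboundary $\bigoplus_v K \to \bigoplus_e K$ of the oriented graph $Gr(X_k)$, so that $H^1(Gr(X_k),\cO)=\mathrm{Coker}(\alpha)$ is the ordinary first cohomology $H^1(Gr(X_k),K)$ of the graph, of dimension equal to the first Betti number. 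Meanwhile $H^1_{\mathrm{dR}}(Y_K,(\mathcal{E},\nabla)_K)=H^1_{\mathrm{rig}}(X_k)\otimes_{K_0}K$ decomposes, by the Mayer--Vietoris sequence for the components $C_v$ of the \emph{special} fiber, as an extension of $\ker\beta$ (the part coming from the $H^1$ of the smooth components, glued along the edge points, where the edge contributions vanish because intersection points are $0$-dimensional) by $H^1$ of the dual graph; and $\varphi^*$ is compatible with both decompositions. So the first task is to set up these two compatible filtrations and reduce the theorem to: (i) $\varphi^*$ is injective, and (ii) on the quotient $\ker\beta$ the relevant comparison identifies the image of $\varphi^*$ with $\ker N$.

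First I would make $N$ explicit in this combinatorial picture. For a class $[\omega]=((\omega_v)_v,(f_e)_e)$, $\tilde N[\omega]=(\mathrm{Res}(\omega_{v}|_{X_e}))_{e=[v,w]}\in\bigoplus_e H^0_{\mathrm{dR}}(X_e,\cO)=\bigoplus_e K$. The key observation is that the cocycle condition $\omega_v|_{X_e}-\omega_w|_{X_e}=df_e$ forces, after taking residues on the annulus $X_e$, that $\mathrm{Res}(\omega_v|_{X_e})=\mathrm{Res}(\omega_w|_{X_e})$ (the residue of an exact form $df_e$ on an annulus is zero), so the tuple $(\mathrm{Res}(\omega_v|_{X_e}))_e$ is genuinely an edge-function independent of the orientation choice; more precisely the sum of residues over all ends incident to a vertex vanishes (global residue theorem on the wide open $X_v$), which says exactly that this tuple lies in the cycle space, i.e. represents a class in $H^1(Gr(X_k),K)$. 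Then $N=i\circ\tilde N$ sends $[\omega]$ to the image of that graph cohomology class under $\gamma$. Hence $\ker N$ is precisely the set of classes whose residue-tuple is a coboundary in the graph, i.e. those killed in $H^1(Gr(X_k),K)$ — which, comparing with (\ref{M-V2}), is exactly the subspace mapping isomorphically onto $\ker\beta$. In other words $\ker N$ fits in $0\to H^1(Gr(X_k),K)\xrightarrow{\gamma}\ker N\to(\text{something inside }\ker\beta)\to 0$, and I must show this ``something'' is all of $\ker\beta$, equivalently that every class in $\ker\beta$ lifts to a class with \emph{vanishing} residue-tuple.

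The heart of the argument — and the main obstacle — is therefore this lifting/vanishing statement: given $[\omega]\in\ker\beta$ one must produce a representative $((\omega_v)_v,(f_e)_e)$ all of whose edge-residues are zero, and then recognize that such residue-free classes are exactly the image of $\varphi^*$. For the second half, a class in the image of $\varphi^*$ comes from a form on the tube $Y_K=]X_k[_{P_{\cal V}}$, which can be represented by forms $\omega_v$ with at worst logarithmic poles along the $C_e$ but whose restrictions to the annuli $X_e$ are (up to the $df_e$) the \emph{log-differential} $dz/z$ scaled by the multiplicity — so its residue-tuple records precisely the ``log part'', and killing it means the class comes from honest rigid cohomology of $X_k$ (no log structure), i.e. from $H^1_{\mathrm{rig}}$. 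I would make this precise by invoking Proposition \ref{propFalt} to pass to log-crystalline cohomology, where the monodromy operator $N$ is the standard one and $\ker N = H^1_{\mathrm{log-crys}}{}^{N=0}$ is identified with $H^1_{\mathrm{rig}}$ by Hyodo--Kato theory; combined with the explicit residue description of $N$ above, this forces $\mathrm{Im}(\varphi^*)=\ker N$. For injectivity of $\varphi^*$: if $\varphi^*[\omega]=0$ then $[\omega]$ is exact on $X_K^{\mathrm{rig}}$, so each $\omega_v=dg_v$ with $g_v$ analytic on the wide open $X_v$; the cocycle relations then show the $g_v$ glue, on each annulus, up to the horizontal (constant) sections, giving a global function on $Y_K$ whose differential is the original class, i.e. the class was already $0$ in $H^1(Y_K)$ — here one uses that $H^0_{\mathrm{dR}}(X_v,\cO)=H^0_{\mathrm{dR}}(X_e,\cO)=K$ so the gluing obstruction lives in $H^1(Gr(X_k),K)$ and is killed precisely by the hypothesis, together with finiteness of all the cohomology groups (Coleman, \cite{Co89} Theorem 4.2) to justify the strictness of the maps. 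I expect the delicate point to be controlling the pole orders / convergence so that the ``residue-free'' representatives genuinely extend across the tube $Y_K$ rather than only across $X_K^{\mathrm{rig}}$, i.e. the precise comparison between the wide-open picture and the tube picture $]X_k[_{P_{\cal V}}$; this is where the arguments of \cite{CoIo10} on the monodromy operator, and the Frobenius/weight input making everything strict, are indispensable.
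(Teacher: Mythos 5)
Your reduction of the theorem to a statement about edge-residues is the same as the paper's, and you correctly isolate the two relevant facts: (a) $N([\omega])=0$ means the residue tuple $(\mathrm{Res}_{X_e}(\omega_{v|X_e}))_e$ dies in $H^1(Gr(X_k),K)=\mathrm{Coker}(\alpha)$, i.e.\ lies in the image of the graph coboundary map (the ``cut space''), and (b) the residue theorem on each wide open $X_v$ forces $\sum_{e\in\mathscr{E}_v}\mathrm{Res}_{X_e}(\omega_{v|X_e})=0$, i.e.\ the tuple lies in the ``cycle space''. But you never combine them. The entire content of the paper's Section 5 is precisely the missing step: Proposition \ref{WT}, that for a connected graph over a field of characteristic $0$ these two subspaces of $\oplus_e\mathbb{F}$ intersect trivially (equivalently, the graph Laplacian has rank $n-1$; proved there either by a min/max argument over a totally ordered field or via $A=DD^t$). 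Granting that, every class in $\mathrm{Ker}(N)$ already has identically vanishing residues, and Lemma \ref{suffnec} finishes. Instead of this, you close the argument by ``invoking Proposition \ref{propFalt} \dots\ where $\ker N=H^1_{\mathrm{log\text{-}crys}}{}^{N=0}$ is identified with $H^1_{\mathrm{rig}}$ by Hyodo--Kato theory.'' That identification is not part of Hyodo--Kato theory: it \emph{is} the invariant cycles theorem, i.e.\ the statement being proved, and prior to this paper it was only known for $k$ finite via the weight-monodromy input of \cite{Ch99}. As written, your argument is circular exactly at the point you yourself flag as ``the heart of the argument.''

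A secondary problem is the injectivity of $\varphi^*$. Gluing local primitives $g_v$ of the $\omega_v$ on the wide opens $X_v$ produces, at best, a function on $X_K^{\mathrm{rig}}$; it says nothing about exactness on the tube $Y_K=]X_k[_{P_{\mathcal V}}$, which is an open neighbourhood of $X_K^{\mathrm{rig}}$ in $P_K^{\mathrm{rig}}$ of higher dimension, so ``giving a global function on $Y_K$'' does not follow. The paper instead proves injectivity componentwise, using the Gysin exact sequence (\ref{G1})--(\ref{G3}) to show $H^1_{\mathrm{dR}}(Y_v,(\mathcal{E},\nabla)_K)\hookrightarrow H^1_{\mathrm{dR}}(X_v,(\mathcal{E},\nabla)_K)$, and then a snake-lemma chase on the two compatible Mayer--Vietoris sequences (diagram (\ref{bigd})), where the $H^1(Gr(X_k),\mathcal{E}_K)$ terms on the two rows coincide. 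You would need to supply both of these inputs to make your sketch of injectivity work.
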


In the next paragraph we are going to prove that if $E$ is not necessarily the trivial isocrystal, then in the 
sequence (\ref{BC}) the map $\varphi^*$ is injective and $\mathrm{Im} (\varphi^*)\subset \mathrm{Ker} (N)$. 
Moreover if $E$ is the trivial isocrystal  we  will give a new proof of theorem \ref{teoBC} using the explicit 
description of the monodromy operator as introduced before.

\bigskip

\begin{rmk} According to \cite{CoIo10}  for the definition of the  monodromy operator on the de Rham cohomology  we 
didn't need 
either the  
Frobenius structure or an isocrystal: we just needed a connection on the generic fiber. In general  we don't know the interpretation of 
such an operator in terms of the integral structures.
\end{rmk}

\bigskip
\section{The behavior of the  monodromy operator}

We would like to study the properties of the monodromy operator as defined in the previous section and, in particular, 
the 
exactness of the sequence (\ref{BC}).  
%We would like to stress the fact that to define the objects   \ref{BC} one needs only to have a convergent isocrystal on $X_k$. Only if we want interpretate the de Rham cohomology and the monodromy operator using log-crystalline cohomology one needs the Frobenius structure. FORSE IN LEMMA 5 ABBIAMO bisogno di Frobenius...

As in section \ref{A Mayer-Vietoris exact sequence} let us consider the graph $Gr(X_k)$ associated to $X_k$, with vertices in $\mathscr{V}$ and edges in $\mathscr{E}$. For $v\in \mathscr{V}$ we denote by $X_v:=\mathrm{sp}_X^{-1}(C_v)$ and by $Y_v:=\mathrm{sp}_{P_{\cal{V}}}^{-1}(C_v)$; because the definition of $\varphi$, we have that $\varphi({X_v})\subset Y_v$. In the same way we denote by $X_e:=\mathrm{sp}_X^{-1}(C_e)$ and by $Y_e:=\mathrm{sp}_{P_{\cal{V}}}^{-1}(C_e)$; because the definition of $\varphi$, we have that $\varphi({X_e})\subset Y_e$. \\
Let us note that $Y_e$ is a polidisk because $P_{\mathcal{V}}$ is smooth. We choose the admissible covering of $X^{\mathrm{rig}}_K$ given by $\{X_v\}_{v\in\mathscr{V}}$ to calculate the de Rham cohomology using \v{C}ech complexes.\\
As before let $E$ be an $F$-convergent isocrystal on $X_k$, we can also use the Mayer-Vietoris spectral sequence for rigid cohomology 
with coefficients in $E$ (\cite{Tsu} theorem 7.1.2). We pick as finite close covering of $X_k$ the covering given by 
$\{C_v\}$. 
Since every intersection of three distinct components is empty the spectral sequence degenerates in a Mayer-Vietoris long exact 
sequence (\cite{Go} theorem 4.6.1)
\begin{equation}\label{M-Vyrig}
\xymatrix{
\oplus_{v\in \mathscr{V}}H^0_{\mathrm{rig}}(C_v, E)\ar[r]^{\alpha}&\oplus_{e\in\mathscr{E}}H^0_{\mathrm{rig}}(C_e, E)\ar[r] &H^1_{\mathrm{rig}}(X_k, E)\ar `r[d] `[l] `[lld] `[dl] [dl] \\ & \oplus_{v\in \mathscr{V}}H^1_{\mathrm{rig}}(C_v,E)\ar[r]^{\sigma}&\oplus_{e\in\mathscr{E}}H^1_{\mathrm{rig}}(C_e, E).
}
\end{equation} 
whose base-change to $K$ can be described  in terms of the de Rham cohomology of $Y_K$ as
\begin{equation}\label{M-Vy}
\xymatrix{
\oplus_{v\in \mathscr{V}}H^0_{\mathrm{dR}}(Y_v, (\mathcal{E}, \nabla)_K)\ar[r]^{\alpha}&\oplus_{e\in\mathscr{E}}H^0_{\mathrm{dR}}(Y_e, (\mathcal{E},\nabla)_K)\ar[r] &H^1_{\mathrm{dR}}(Y_K, (\mathcal{E},\nabla)_K)\ar `r[d] `[l] `[lld] `[dl] [dl] \\ & \oplus_{v\in \mathscr{V}}H^1_{\mathrm{dR}}(Y_v, (\mathcal{E}, \nabla)_K)\ar[r]^{\sigma}&\oplus_{e\in\mathscr{E}}H^1_{\mathrm{dR}}(Y_e, (\mathcal{E}, \nabla)_K).
}
\end{equation}

Now we study the exactness property of the sequence (\ref{BC}).
\begin{lemma}\label{injectivity}
If $E$ is a convergent isocrystal and $(\mathcal{E}, \nabla)$ is the coherent module with integrable connection induced by it, then 
the map $\varphi^*$ in the sequence (\ref{BC}) is injective.
\end{lemma}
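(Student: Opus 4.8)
The plan is to compare the Mayer--Vietoris sequence (\ref{M-Vy}) for $Y_K$ with the sequence (\ref{M-V}) for $X_K^{\mathrm{rig}}$ through the restriction maps induced by $\varphi$ (which, as recalled just above, sends $X_v$ into $Y_v$ and $X_e$ into $Y_e$), and then to run a short diagram chase. Write $\alpha_Y$, resp.\ $\alpha_X$, for the map $\alpha$ appearing in (\ref{M-Vy}), resp.\ (\ref{M-V}). Since each $Y_e$ is a polydisk, $(\mathcal{E},\nabla)_K$ is trivial on $Y_e$ by the Poincar\'e lemma, so $H^1_{\mathrm{dR}}(Y_e,(\mathcal{E},\nabla)_K)=0$; hence the bottom row of (\ref{M-Vy}) vanishes and (\ref{M-Vy}) degenerates, exactly as (\ref{M-V2}) does, into a short exact sequence
\begin{equation*}
0\longrightarrow \mathrm{Coker}(\alpha_Y)\stackrel{\gamma_Y}{\longrightarrow} H^1_{\mathrm{dR}}(Y_K,(\mathcal{E},\nabla)_K)\longrightarrow \bigoplus_{v\in\mathscr{V}}H^1_{\mathrm{dR}}(Y_v,(\mathcal{E},\nabla)_K)\longrightarrow 0.
\end{equation*}
Restriction along $\varphi$ is a morphism from this short exact sequence to (\ref{M-V2}), so by the snake (four) lemma it suffices to prove that the two outer vertical maps are injective: (i) the map $\bigoplus_v H^1_{\mathrm{dR}}(Y_v,(\mathcal{E},\nabla)_K)\to\bigoplus_v H^1_{\mathrm{dR}}(X_v,(\mathcal{E},\nabla)_K)$, and (ii) the induced map $\mathrm{Coker}(\alpha_Y)\to\mathrm{Coker}(\alpha_X)$.

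The geometric input is as follows. Because $E$ is a convergent isocrystal on $X_k$ carrying \emph{no} log structure, its restriction to each (smooth, proper) component $C_v$ is again a convergent isocrystal without singularities; hence $(\mathcal{E},\nabla)_K|_{X_v}$ extends to a module with integrable connection on the smooth proper curve $\widetilde{C}_v$ over $K$ of which $X_v$ is a wide open (the complement of finitely many closed disks, by \cite{Co89}), and $H^1_{\mathrm{dR}}(Y_v,(\mathcal{E},\nabla)_K)=H^1_{\mathrm{rig}}(C_v,E)_K$ is computed by that same proper curve, in such a way that the map in (i) becomes the restriction from $\widetilde{C}_v$ to $X_v$. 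Now for a module with integrable connection on a smooth proper curve over $K$ whose connection is everywhere nonsingular, restriction to the complement of finitely many closed disks is injective on $H^1_{\mathrm{dR}}$ (Gysin/excision, together with Coleman's finiteness comparison \cite{Co89} Theorem 4.2) and is \emph{bijective} on global horizontal sections $H^0_{\mathrm{dR}}$ (a horizontal section over $\widetilde{C}_v\setminus\{\text{points}\}$ extends uniquely across those points, by the Poincar\'e lemma on the residue disks). The first statement gives (i).

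The second statement gives the surjectivity of $H^0_{\mathrm{dR}}(Y_v,(\mathcal{E},\nabla)_K)\to H^0_{\mathrm{dR}}(X_v,(\mathcal{E},\nabla)_K)$. Combined with the elementary fact that $H^0_{\mathrm{dR}}(Y_e,(\mathcal{E},\nabla)_K)\to H^0_{\mathrm{dR}}(X_e,(\mathcal{E},\nabla)_K)$ is injective --- a horizontal section of $(\mathcal{E},\nabla)_K$ on the connected space $Y_e$ is determined by its value at any one point of the nonempty $X_e\subseteq Y_e$ --- this yields (ii) by a direct chase: if $\overline{(\xi_e)_e}\in\mathrm{Coker}(\alpha_Y)$ satisfies $(\xi_e|_{X_e})_e=\alpha_X\big((s_v)_v\big)$ for some $(s_v)_v$, lift each $s_v$ to $\tilde s_v\in H^0_{\mathrm{dR}}(Y_v,(\mathcal{E},\nabla)_K)$; then $\alpha_Y\big((\tilde s_v)_v\big)-(\xi_e)_e$ is a family in $\bigoplus_e H^0_{\mathrm{dR}}(Y_e,(\mathcal{E},\nabla)_K)$ restricting to $0$ on every $X_e$, hence zero, so $(\xi_e)_e\in\mathrm{Im}(\alpha_Y)$ and the class vanishes.

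I expect the main obstacle to be making the geometric input precise: identifying $(\mathcal{E},\nabla)_K|_{X_v}$ with the restriction to the wide open $X_v$ of a genuine module with connection on the smooth proper curve $\widetilde C_v$, and checking that the cohomology restriction map for the inclusion $X_v\hookrightarrow Y_v$ really becomes the algebraic restriction $H^1_{\mathrm{dR}}(\widetilde C_v,\cdot)\to H^1_{\mathrm{dR}}(X_v,\cdot)$. This is exactly where the hypothesis that $E$ is convergent on $X_k$ \emph{without} log structure enters --- for the log-$F$-isocrystal attached to $E$ the connection on $\widetilde C_v$ would acquire regular singularities at the residue disks of the nodes and the argument would genuinely break (as it must, since the analogue of the conclusion can fail in the log setting). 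The remaining verifications --- commutativity of the Mayer--Vietoris ladder, the Gysin computation, and the Poincar\'e lemma on disks and polydisks --- are routine.
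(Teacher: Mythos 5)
Your proposal is correct and follows essentially the same route as the paper: the paper likewise compares the two Mayer--Vietoris sequences (its diagram (\ref{bigd})), proves the injectivity of $\oplus_v H^1_{\mathrm{dR}}(Y_v,(\mathcal{E},\nabla)_K)\to\oplus_v H^1_{\mathrm{dR}}(X_v,(\mathcal{E},\nabla)_K)$ componentwise via the Gysin sequence for rigid cohomology on each smooth proper component, identifies the graph-cohomology terms using the isomorphisms $H^0_{\mathrm{dR}}(Y_v)\cong H^0_{\mathrm{dR}}(X_v)$ and $H^0_{\mathrm{dR}}(Y_e)\cong H^0_{\mathrm{dR}}(X_e)$, and concludes by the snake lemma. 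Your explicit chase for the injectivity on cokernels is just a spelled-out version of the paper's identification of the two copies of $H^1(Gr(X_k),\mathcal{E}_K)$.
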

\begin{proof}
We fix an irreducible component $C_v$ of $X_k$, we want to prove that the following sequence is exact:
\begin{equation}\label{G1}
0\longrightarrow H^1_{\mathrm{dR}}(Y_v, (\mathcal{E}, \nabla)_K)\longrightarrow H^1_{\mathrm{dR}}(X_v, (\mathcal{E}, \nabla)_K)\longrightarrow \bigoplus_{e\in \mathscr{E}_v}H^0_{\mathrm{dR}}(X_e, (\mathcal{E}, \nabla)_K);
\end{equation}
where the last map is the residue map and $\mathscr{E}_v:=\{e \mbox{ such that there exists a vertex }w \mbox{ with } 
e=[v,w]\}$.
As $C_v$ is proper and smooth the above sequence will be isomorphic to the following sequence:
\begin{equation}\label{G2}
0\longrightarrow H^1_{\mathrm{crys}}(C_v, E)\otimes K \longrightarrow H^1_{\mathrm{log-crys}}(C_v^{\times\times}, E) \otimes K\longrightarrow \bigoplus_{e \in \mathscr{E}_v}H^0_{\mathrm{dR}}(X_e, (\mathcal{E}, \nabla)_K),
\end{equation}
where $C_v^{\times \times}$ is the log scheme given by the component $C_v$ with the log structure induced by the divisor given by the intersection points of $C_v$ with the other components. The two sequence are isomorphic because $H^1_{\mathrm{crys}}(C_v, E)\otimes K\cong H^1_{\mathrm{dR}}(Y_v, (\mathcal{E}, \nabla)_K)$ since $C_v$ is proper and smooth, 
$H^1_{\mathrm{log-crys}}(C_v^{\times \times}, E)\otimes K\cong  H^1_{\mathrm{dR}}(X_v, (\mathcal{E}, \nabla)_K)$ by 
\cite{CoIo10} lemma 5.2. Moreover the second one is exact because is the Gysin sequence for rigid cohomology.\\
In fact the Gysin sequence for rigid cohomology is the following (proposition 2.1.4 of \cite{ChLeS}):
\begin{equation}\label{G3}
0\longrightarrow H^1_{\mathrm{rig}}(C_v, E)\otimes K\longrightarrow H^1_{\mathrm{rig}}(U_v, E)\otimes K\longrightarrow \bigoplus_{e\in \mathscr{E}_v}H^0_{\mathrm{dR}}(X_e, (\mathcal{E}, \nabla)_K),
\end{equation}
where $U_v$ is the complement in $C_v$  of all the points of $C_v$ that intersect the other components of $X_k$.\\
The isomorphism $H^1_{\mathrm{rig}}(U_v, E)\cong H^1_{\mathrm{log-crys}}(C_v^{\times\times}, E)$ follows from \cite{Sh02} 
paragraph 2.4 and \cite{Sh02} theorem 3.1.1. Moreover $H^0_{\mathrm{dR}}(X_e, (\mathcal{E}, \nabla)_K)\cong 
H^0_{\mathrm{dR}}(Y_e, (\mathcal{E}, \nabla)_K)$ because $Y_e$ and $X_e$ are residue classes and $E$ has a basis of 
horizontal sections on each residue class, which means that both $H^0_{\mathrm{dR}}(X_e, (\mathcal{E}, \nabla)_K)$ and 
$H^0_{\mathrm{dR}}(Y_e, (\mathcal{E}, \nabla)_K)$ are isomorphic to $K^d$ where $d$ is the rank of $\mathcal{E}$ as 
$\mathcal{O}_{X_K}$-module. %Ricorda che sono moduli a connessioni integrabili senza singolarità, per cui localmente liberi. 
Moreover by the Gysin isomorphism in degree zero  (proposition 2.1.4 of \cite{ChLeS}), with the same notations
as before, we have 
$H^0_{\mathrm{rig}}(C_v, E)\cong H^0_{\mathrm{rig}}(U_v, E)$, which implies that $H^0_{\mathrm{dR}}(X_v, (\mathcal{E}, \nabla)_K)
\cong H^0_{\mathrm{dR}}(Y_v, (\mathcal{E}, \nabla)_K)$, using the same techniques as before.\\ 
Using the Mayer-Vietoris long exact sequence for rigid cohomology (\ref{M-Vy}), we can pass to the following short 
exact sequence
\begin{equation}\label{M-Vy2}
\xymatrix{
0\ar[r]& H^1(Gr(X_k), \mathcal{E}_K)\ar[r]^{\delta} &H^1_{\mathrm{dR}}(Y_K, (\mathcal{E}, \nabla)_K))\ar[r]&\mathrm{Ker}(\sigma)\ar[r]&0.
}
\end{equation}
where $H^1(Gr(X_k), \mathcal{E}_K):=\mathrm{Coker}(\alpha)$.\\
Putting together Mayer-Vietoris sequences for the coverings $\{X_v\}$ and $\{Y_v\}$ respectively
we obtain the following diagram
\begin{equation}\label{bigd} 
\xymatrix{ 
        &                                & \oplus_{e}H^0_{\mathrm{dR}}(X_e, (\mathcal{E}, \nabla)_K)\ar[r]\ar[dl]_{\theta}&     \oplus_{e}H^0_{\mathrm{dR}}(X_e,(\mathcal{E}, \nabla)_K)&\\
0\ar[r]& H^{1}(Gr(X_k), \mathcal{E}_K)\ar[r]^{\gamma}&H^{1}_{\mathrm{dR}}(X_K, (\mathcal{E}, \nabla)_K) \ar[r]^{\pi_X}\ar[u]^{Res}&\oplus_{v}H^{1}_{\mathrm{dR}}(X_v, (\mathcal{E}, \nabla)_K) \ar[u]&\\
0\ar[r]& H^{1}(Gr(X_k), \mathcal{E}_K)\ar[r]^{\delta}\ar[u] &H^{1}_{\mathrm{dR}}(Y_K, (\mathcal{E}, \nabla)_K) \ar[u]^{\varphi^*} \ar[r]^{\pi_Y}&\oplus_{v}H^{1}_{\mathrm{dR}}(Y_v, (\mathcal{E}, \nabla)_K)\ar[u]^{\varphi^*}\ar[r]&0\\
          &                                                             &                                                                                    & 0\ar[u]}
 \end{equation}
and by the snake lemma one can conclude that $\varphi^{*}:H^{1}_{\mathrm{dR}}(Y_K, (\mathcal{E}, \nabla)_K)\rightarrow H^{1}_{\mathrm{dR}}(X_K, (\mathcal{E}, \nabla)_K)$ is injective.

\end{proof}
\begin{rmk}
Let us note that in (\ref{bigd}) the monodromy operator on $H^1_{\mathrm{dR}}(X_K, (\mathcal{E}, \nabla)_K)$ acts as $N=\gamma \circ \theta\circ Res $.\\

\end{rmk}
\begin{lemma}\label{Nphi}
If $E$ is a convergent $F$-isocrystal and $(\mathcal{E}, \nabla)$ is the coherent module with integrable connection 
induced by it, then in the sequence (\ref{BC})
$$N\circ \varphi^*=0.$$
\end{lemma}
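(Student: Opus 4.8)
The plan is to read off the vanishing $N\circ\varphi^*=0$ directly from the large commutative diagram (\ref{bigd}) together with the description of $N$ recorded in the preceding remark, namely $N=\gamma\circ\theta\circ\mathrm{Res}$. Recall that $\varphi^*\colon H^1_{\mathrm{dR}}(Y_K,(\mathcal{E},\nabla)_K)\to H^1_{\mathrm{dR}}(X_K^{\mathrm{rig}},(\mathcal{E},\nabla)_K)$ fits into the middle column of (\ref{bigd}), and that the map $\mathrm{Res}$ appearing in $N$ is (up to the identification $H^0_{\mathrm{dR}}(X_e)\cong H^0_{\mathrm{dR}}(Y_e)$ used in the proof of Lemma~\ref{injectivity}) precisely the residue component of the Mayer--Vietoris boundary maps. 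So the heart of the argument is that the image of $\varphi^*$ lies in the kernel of $\theta\circ\mathrm{Res}$.

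First I would chase an element $[\omega]\in H^1_{\mathrm{dR}}(Y_K,(\mathcal{E},\nabla)_K)$ through the diagram. The key point is that a class coming from $Y_K$ can be represented, via the Mayer--Vietoris short exact sequence (\ref{M-Vy2}), as a \v{C}ech hypercocycle for the covering $\{Y_v\}$ whose differential-form components $\omega_v$ extend over the polydisks $Y_e$ (these are the $Y_e$, which the text notes are polydisks since $P_{\mathcal V}$ is smooth, hence have vanishing $H^0$ of positive-degree forms in the relevant sense, and in any case carry no residues). Pulling back along $\varphi$, the class $\varphi^*[\omega]$ is then represented by a hypercocycle on $\{X_v\}$ whose $\omega_v$-components are restrictions of forms that are holomorphic across each annulus $X_e$; such forms have zero residue on $X_e$. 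Concretely: $\mathrm{Res}(\omega_{v|X_e})=0$ for every oriented edge $e=[v,w]$, because $\omega_{v|X_e}$ is the restriction of a section holomorphic on the polydisk $Y_e$ (using $\varphi(X_e)\subset Y_e$ and $\varphi(X_v)\subset Y_v$). Therefore $\mathrm{Res}\circ\varphi^*=0$ on representatives, hence $\tilde N\circ\varphi^*=0$, and a fortiori $N\circ\varphi^*=i\circ\tilde N\circ\varphi^*=0$.

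Alternatively, and perhaps more cleanly, one can argue purely diagrammatically: by commutativity of the lower-right square of (\ref{bigd}) the composite $\pi_X\circ\varphi^*$ factors through $\oplus_v H^1_{\mathrm{dR}}(Y_v,(\mathcal{E},\nabla)_K)$, and the residue map $\oplus_v H^1_{\mathrm{dR}}(X_v)\to\oplus_e H^0_{\mathrm{dR}}(X_e)$ appearing in (\ref{bigd}) kills the image of $\oplus_v H^1_{\mathrm{dR}}(Y_v)$ — this is exactly the exactness of the Gysin sequence (\ref{G3}) established in the proof of Lemma~\ref{injectivity}, since $H^1_{\mathrm{rig}}(C_v,E)\otimes K\cong H^1_{\mathrm{dR}}(Y_v,(\mathcal{E},\nabla)_K)$ maps to zero under the residue/boundary map to $\oplus_{e\in\mathscr E_v}H^0_{\mathrm{dR}}(X_e)$. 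Hence $\mathrm{Res}\circ\varphi^*=0$, and composing with $\theta$ and then $\gamma$ gives $N\circ\varphi^*=\gamma\circ\theta\circ\mathrm{Res}\circ\varphi^*=0$.

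The main obstacle I anticipate is bookkeeping rather than mathematics: one must make sure that the ``$\mathrm{Res}$'' arrow in diagram (\ref{bigd}) is genuinely the same map (after the canonical identifications $H^0_{\mathrm{dR}}(X_e,(\mathcal{E},\nabla)_K)\cong H^0_{\mathrm{dR}}(Y_e,(\mathcal{E},\nabla)_K)$ of the previous lemma) that enters the definition $N=i\circ\tilde N$, and that the relevant squares of (\ref{bigd}) commute with the sign/orientation conventions on edges used to define $\tilde N$. Once these identifications are pinned down — they are exactly the ones already invoked in the proof of Lemma~\ref{injectivity} — the conclusion is immediate from the exactness of the rigid Gysin sequence, i.e.\ from the fact that classes extending over the polydisks $Y_e$ are residue-free on the annuli $X_e$.
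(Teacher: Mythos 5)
Your proposal is correct and follows essentially the same route as the paper: the authors also chase $[\omega]$ through diagram (\ref{bigd}), reduce to showing $\mathrm{Res}_{|X_e}(\varphi^*(\pi_Y[\omega]))=0$, and conclude because the restriction to $X_e$ comes from a form on the polydisk $Y_e$, where $H^1_{\mathrm{dR}}(Y_e,(\mathcal{E},\nabla)_K)=0$ forces the residue to vanish. Your alternative Gysin-sequence phrasing is just a repackaging of the same vanishing already used in Lemma \ref{injectivity}, so there is nothing substantively different to compare.
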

\begin{proof}
Let us consider $[\omega]$ $\in$ $H^1_{\mathrm{dR}}(Y_K, (\mathcal{E}, \nabla)_K)$. Then $\varphi^*[\omega]$, which is an element of $H^1_{\mathrm{dR}}(X_K, (\mathcal{E}, \nabla)_K)$, can be represented by an hypercocycle 
$((\alpha_v)_{v\in \mathscr{V}}, (g_e)_{e\in \mathscr{E}})$ where $\alpha_v \in \Omega^1_{X_v}\otimes \mathcal{E}_{X_v} $ and $g_e$ in $\mathcal{E}_{X_e}$ and they verify that $\alpha_{v|X_e}-\alpha_{w|X_e}=\nabla (g_e)$ if $e=[v,w]$. We want to calculate $N(\varphi^*([\omega]))$. We now look at the diagram (\ref{bigd}). By the definition of $N$ one can see that 
$$N(\varphi^*([\omega]))=\gamma \circ \theta\circ Res (\varphi^*([\omega]))=\gamma \circ \theta\circ Res_{|X_e} (\pi_X(\varphi^*[\omega])).$$
By the commutativity of the diagram (\ref{bigd}) $Res_{|X_e} (\pi_X(\varphi^*[\omega]))=Res_{|X_e} (\varphi^*(\pi_Y([\omega])).$\\
If we denote by $\omega_v=\pi_Y([\omega])$, then we have to compute $Res_{|X_e}(\varphi^*(\omega_v))$:
$$Res_{|X_e}(\varphi^*(\omega_v))=Res (\varphi^*(\omega_v)_{|X_e})=Res(\varphi^*(\gamma_e))$$  
where $\gamma_e \in \mathcal{E}_{Y_e}\otimes\Omega^1_{Y_e}$, but as $Y_e$ is an open polydisc we have that
 $H^1_{\mathrm{dR}}(Y_e, (\mathcal{E}, \nabla)_K)=0$ and so $Res(\phi^*(\gamma_e))=0$ as claimed.
\end{proof}

From the above lemma we can conclude that $\mathrm{Im}(\varphi^*)\subset \mathrm{Ker} (N)$. Now we'd like  to 
characterize in terms of residues the elements of $H^1_{\mathrm{dR}}(X_K, (\mathcal{E}, \nabla)_K)$ which are in the 
image of $\varphi^*$.\\
Let us take $[\omega]$ $\in$ $H^1_{\mathrm{dR}}(X_K, (\mathcal{E}, \nabla)_K)$. As before we can choose a 
representative $\omega=((\omega_v)_{v\in \mathscr{V}}, (f_e)_{e\in\mathscr{E}})$, with $(\omega_v)$ in $\mathcal{E}_{X_e}\otimes\Omega^1_{X_v}$ and $f_e$ in $\mathcal{E}_{X_e}$ which verifies that $\omega_{v|X_e}-\omega_{w|X_e}=\nabla(f_e)$ if $e=[v,w]$.\\
In the next lemma we prove a necessary and sufficient condition for an element of $H^1_{\mathrm{dR}}(X_K,
(\mathcal{E}, \nabla)_K)$ to be in the image of the map $\varphi^*$.

\begin{lemma}\label{suffnec}
Let us take $[\omega]$ $\in$ $H^1_{\mathrm{dR}}(X_K, (\mathcal{E}, \nabla)_K)$ and a representative 
$\omega=((\omega_v)_{v\in \mathscr{V}}, (f_e)_{e\in\mathscr{E}})$ as above. 
Then $\mathrm{Res}_{X_e}(\omega_{v|X_e})=0$ for every $e\in\mathscr{E}$ if and only if $[\omega]\in \mathrm{Im}(\varphi^*)$.
\end{lemma}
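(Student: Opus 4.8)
The plan is to prove the two implications separately. The implication from $[\omega]\in\mathrm{Im}(\varphi^*)$ to the vanishing of $\mathrm{Res}_{X_e}(\omega_{v|X_e})$ for all $e\in\mathscr{E}$ is essentially already contained in the proof of Lemma \ref{Nphi}; the converse --- that the vanishing of all these residues forces $[\omega]\in\mathrm{Im}(\varphi^*)$ --- I would obtain by a short diagram chase in (\ref{bigd}) promoting the vertex-by-vertex Gysin sequences (\ref{G1}) to a global statement.

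First I would recall that the family $\big(\mathrm{Res}(\omega_{v|X_e})\big)_{e\in\mathscr{E}}$ depends only on the class $[\omega]$, not on the chosen hypercocycle representative: two representatives differ by a total coboundary, which alters each $\omega_v$ by an exact form $\nabla h_v$, and an exact form has vanishing residue on the annulus $X_e$ (this is the well-definedness of $\tilde{N}$). For the first implication, write $[\omega]=\varphi^*([\eta])$ with $[\eta]\in H^1_{\mathrm{dR}}(Y_K,(\mathcal{E},\nabla)_K)$ and compute these residues from the representative of $\varphi^*([\eta])$ obtained by pulling back, along $\varphi$, a hypercocycle for $[\eta]$ on the covering $\{Y_v\}_{v\in\mathscr{V}}$: since $\varphi(X_e)\subset Y_e$ and $H^1_{\mathrm{dR}}(Y_e,(\mathcal{E},\nabla)_K)=0$ ($Y_e$ being an open polydisc because $P_{\mathcal V}$ is smooth), the $v$-component restricted to $Y_e$ is exact, hence so is its pull-back to $X_e$, whose residue therefore vanishes; this is precisely the computation carried out inside the proof of Lemma \ref{Nphi}.

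For the converse implication, assume $\mathrm{Res}_{X_e}(\omega_{v|X_e})=0$ for every $e\in\mathscr{E}$. Step one: I would look at $\pi_X([\omega])=([\omega_v])_v\in\bigoplus_v H^1_{\mathrm{dR}}(X_v,(\mathcal{E},\nabla)_K)$, the collection of restrictions of $[\omega]$ to the $X_v$; the image of $[\omega_v]$ under the residue map $H^1_{\mathrm{dR}}(X_v,(\mathcal{E},\nabla)_K)\to\bigoplus_{e\in\mathscr{E}_v}H^0_{\mathrm{dR}}(X_e,(\mathcal{E},\nabla)_K)$ appearing in (\ref{G1}) is $\big(\mathrm{Res}(\omega_{v|X_e})\big)_{e\in\mathscr{E}_v}$, which is zero by hypothesis, so by exactness of (\ref{G1}) each $[\omega_v]$ lies in the image of $\varphi^*\colon H^1_{\mathrm{dR}}(Y_v,(\mathcal{E},\nabla)_K)\to H^1_{\mathrm{dR}}(X_v,(\mathcal{E},\nabla)_K)$; I then write $[\omega_v]=\varphi^*(\eta_v)$ and set $\eta:=(\eta_v)_v$. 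Step two: the map $\pi_Y$ is surjective by (\ref{M-Vy2}) (equivalently, $\sigma=0$ since $H^1_{\mathrm{dR}}(Y_e,(\mathcal{E},\nabla)_K)=0$ for all $e$), so I choose $[\rho]\in H^1_{\mathrm{dR}}(Y_K,(\mathcal{E},\nabla)_K)$ with $\pi_Y([\rho])=\eta$. Step three: by commutativity of the right-hand square of (\ref{bigd}), $\pi_X(\varphi^*[\rho])=\varphi^*(\pi_Y[\rho])=\varphi^*(\eta)=([\omega_v])_v=\pi_X([\omega])$, hence $[\omega]-\varphi^*[\rho]\in\ker(\pi_X)=\mathrm{Im}(\gamma)$ by the short exact sequence (\ref{M-V2}); writing $[\omega]-\varphi^*[\rho]=\gamma(c)$ with $c\in H^1(Gr(X_k),\mathcal{E}_K)$ and using commutativity of the left-hand square of (\ref{bigd}), whose left vertical arrow is the identity of $H^1(Gr(X_k),\mathcal{E}_K)$, so that $\gamma(c)=\varphi^*(\delta(c))$, I conclude $[\omega]=\varphi^*\big([\rho]+\delta(c)\big)\in\mathrm{Im}(\varphi^*)$.

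The points I would verify rather than simply assert are: that the residue map in (\ref{G1}) sends the restricted class $[\omega_v]$ to $\big(\mathrm{Res}(\omega_{v|X_e})\big)_{e\in\mathscr{E}_v}$ --- a compatibility between the Gysin residue and the annulus residue built into the identification of (\ref{G1}) with the Gysin sequence (\ref{G3}) --- and the commutativity of the two squares of (\ref{bigd}) together with the fact that its left vertical arrow is the identity, which holds because $\alpha$ is literally the same map for the Mayer--Vietoris sequences of $\{X_v\}$ and of $\{Y_v\}$, via the identifications $H^0_{\mathrm{dR}}(X_v,(\mathcal{E},\nabla)_K)\cong H^0_{\mathrm{dR}}(Y_v,(\mathcal{E},\nabla)_K)$ and $H^0_{\mathrm{dR}}(X_e,(\mathcal{E},\nabla)_K)\cong H^0_{\mathrm{dR}}(Y_e,(\mathcal{E},\nabla)_K)$ recorded in the proof of Lemma \ref{injectivity}. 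I expect step one of the converse implication --- the reduction to the vertexwise Gysin sequences, together with this residue compatibility --- to be the real content; the other two steps are formal.
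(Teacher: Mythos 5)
Your proposal is correct and follows essentially the same route as the paper's own proof: the forward implication via the computation in Lemma \ref{Nphi} (exactness on the polydiscs $Y_e$ kills the residues), and the converse by lifting each $[\omega_v]$ through the vertexwise Gysin sequence (\ref{G1}), using surjectivity of $\pi_Y$ and the commutativity of (\ref{bigd}) to correct by an element of $\mathrm{Im}(\gamma)=\mathrm{Im}(\varphi^*\circ\delta)$. Your added remark that the family of residues depends only on the class $[\omega]$, and your explicit identification of the Gysin residue with the annulus residue, are points the paper leaves implicit but do not change the argument.
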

\begin{proof}
Let us see first that if $\mathrm{Res}_{X_e}(\omega_{v|X_e})=0$ for every $e\in\mathscr{E}$, then $[\omega]\in \mathrm{Im}(\varphi^*).$
If $\mathrm{Res}_{X_e}(\omega_{v|X_e})=0$, then thanks to the exact sequence (\ref{G1}) there exists $\gamma_v \in H^1_{\mathrm{dR}}(Y_v, (\mathcal{E}, \nabla)_K)$ such that $\varphi^*(\gamma_v)=\omega_v$ for every $v \in \mathscr{V}$. As the map 
$\pi_Y$ in (\ref{bigd}) is surjective there exists $\alpha \in H^1_{\mathrm{dR}}(Y_K, (\mathcal{E}, \nabla)_K)$ such that 
$\pi_Y(\alpha)=(\omega_v)_{v\in \mathscr{V}}.$\\
Now $\pi_X([\omega]-\varphi^*(\alpha))=0$, hence, looking again at diagram (\ref{bigd}), there exists 
$c\in H^1(Gr(X_k), \mathcal{E}_K)$ such that $[\omega]-\varphi^*(\alpha)=\gamma(c)$. By the commutativity of diagram 
(\ref{bigd}) there exists an element $\mu \in H^1_{\mathrm{dR}}(Y_K, (\mathcal{E}, \nabla)_K)$ such that 
$\varphi^*(\mu)=\gamma(c).$ (One can choose $\mu=\delta(c)$.) \\
Viceversa if $[\omega]=\varphi^*(\alpha)$ for $\alpha\in H^1_{\mathrm{dR}}(Y_K, (\mathcal{E}, \nabla)_K),$ then $(\omega_v)_{v\in \mathscr{V}}=\varphi^*(\pi_Y(\alpha)):=\varphi^*(\alpha_v)_{v\in\mathscr{V}}.$ Hence $Res_{|X_e}(\omega_v)=Res_{|X_e}(\varphi^*(\alpha_v))$ for every $v\in \mathscr{V}.$ But as in the proof of lemma \ref{Nphi} one can prove that from this it follows 
that  $Res_{|X_e}(\omega_v)=0$ for every $v\in \mathscr{V}$.
\end{proof}

\section{The constant coefficients case}

In this paragraph we show that if $E$ is the trivial convergent $F$-isocrystal, then the condition in 
lemma \ref{suffnec} is fulfilled.  This will imply  that the sequence in (\ref{BC}) is exact and it will give a 
new proof of theorem \ref{teoBC} i.e. the exactness of the invariant cycles sequence under the assumption that $k$ 
is perfect instead of finite. The realization 
of $E$ on $X_K^{\rm rig}$ is the structure sheaf with trivial connection $(\mathcal{O}_{X_K}, d)$.   \\

We'd like to prove that if $[\omega]\in H^1_{\mathrm{dR}}(X^{\rm rig}_K)$ is such that 
$N([\omega])=0$, then 
one can find a hypercocycle $(\omega_v, f_e)$ representing it 
such that $\mathrm{Res}_{X_e}(\omega_{v|X_e})=0$: hence we may apply 
lemma \ref{suffnec} and  conclude.\\
Let $(\omega_v, f_e)$ be a hypercocycle representing $[\omega]$ and consider 
$\mathrm{Res}_{X_e}(\omega_{v|X_e})$; if $[\omega]$ is in $\mathrm{Ker}(N)$, then 
$(\mathrm{Res}_{X_e}(\omega_{v|X_e}))_e=0$ in $H^1(Gr(X_k), \mathcal{O}_K)$, that means that  
$(\mathrm{Res}_{X_e}(\omega_{v|X_e}))_e \in \mathrm{CoKer}(\oplus_{v\in \mathscr{V}}H^0_{\mathrm{dR}}(X_v)\rightarrow \oplus_{e\in\mathscr{E}}H^0_{\mathrm{dR}}(X_e))$.\\
On the other hand, thanks to the residue theorem on wide opens (proposition 4.3 of \cite{Co89}), for every 
irreducible component $C_v$ in $X_k,$ the family $(\mathrm{Res}_{X_e}(\omega_{v|X_e}))_e$ verifies that
\begin{equation}\label{sumres}
\sum_{e\in \mathscr{E}_v}\mathrm{Res}_{X_e}(\omega_{v|X_e})=0,
\end{equation}
where the notation $\mathscr{E}_v$ refers to the set 
$\{e \mbox{ such that there exists a vertex }w \mbox{ with } e=[v,w]\}.$ \\
Hence to prove that $\mathrm{Res}_{X_e}(\omega_{v|X_e})=0$ we are left to prove that if  
$(\mathrm{Res}_{X_e}(\omega_{v|X_e}))_e \in \mathrm{CoKer}(\oplus_{v\in \mathscr{V}}H^0_{\mathrm{dR}}(X_v)
\rightarrow \oplus_{e\in\mathscr{E}}H^0_{\mathrm{dR}}(X_e))$ and for every $v$ it verifies that 
$\sum_{e\in \mathscr{E}_v}\mathrm{Res}_{X_e}(\omega_{v|X_e})=0$, then $(\mathrm{Res}_{X_e}(\omega_{v|X_e}))_e=0$ for all $e$. 
So we are reduced to a linear algebra and graph theory problem, which  we can translate as follows.\\
Let $\mathbb{F}$ be a field of characteristic $0$.
Let $G$ be a connected graph with $n$ vertices and $m$ edges. Let us denote by $\mathscr{V}$ the set of all vertices 
and by $\mathscr{E}$ the set of all oriented edges. We use the notation $e=[v,w]$ to indicate an edge between the 
vertex $v$ and the vertex $w$.
We associate to $G$ a vector space $V=\oplus_{e\in \mathcal{E}}\mathbb{F} $ with the convention that if $e=[v,w]$ and $\bar{e}=[w,v]$ 
then $a_{e}=-a_{\bar{e}}$. Then there is a map 
$$\phi:\oplus_{v\in \mathscr{V}}\mathbb{F}\rightarrow \oplus_{e\in \mathscr{E}}\mathbb{F}  $$
$$(a_{v})_{v \in \mathscr{V}}\mapsto (a_e)_{e\in \mathscr{E}}$$
where $a_{e}=a_{v}-a_w$ if $e=[v,w]$.
We consider two vector subspaces $W$ and $T$ of $\oplus_{e\in \mathscr{E} }\mathbb{F}$ where 
$$W=\{(a_e)_{e \in \mathscr{E}} |  (a_e)_{e\in \mathscr{E}}\in \mathrm{Im}(\phi) \}$$  
$$T=\{(a_e)_{e \in \mathscr{E}} | \forall v \in \mathscr{V} \sum_{e\in \mathscr{E}_v}a_e=0 \}.$$

\begin{prop}\label{WT}
With notations as before we have 
$W\cap T=0$
\end{prop}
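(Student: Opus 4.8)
The plan is to reduce Proposition~\ref{WT} to the classical statement that a harmonic function on a connected finite graph is constant. First I would take $(a_e)_{e\in\mathscr{E}}\in W\cap T$: since it lies in $W=\mathrm{Im}(\phi)$, there is $\mathbf{a}=(a_v)_{v\in\mathscr{V}}\in\bigoplus_{v\in\mathscr{V}}\mathbb{F}$ with $a_e=a_v-a_w$ for $e=[v,w]$; since it also lies in $T$, we get for every $v$ that $0=\sum_{e\in\mathscr{E}_v}a_e=\sum_{w\sim v}(a_v-a_w)=\deg(v)\,a_v-\sum_{w\sim v}a_w$, i.e. $\mathbf{a}$ lies in the kernel of the combinatorial Laplacian $L=D-A$ of $G$ (with $D$ the diagonal degree matrix and $A$ the adjacency matrix, counted with multiplicity for multiple edges). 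Hence it suffices to show that $\ker_{\mathbb{F}}(L)$ consists only of constant vectors, for then $a_e=a_v-a_w=0$ for every $e$ and we are done.

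Next I would use that $L$ is an integer matrix, so its rank — and therefore the dimension of its kernel — is unchanged under field extension; since the all-ones vector $\mathbf{1}$ always lies in $\ker L$, it is enough to prove $\dim_{\mathbb{Q}}\ker_{\mathbb{Q}}(L)=1$, which may be checked after base change to $\mathbb{R}$. Over $\mathbb{R}$ I would invoke that $L$ is positive semidefinite, via the identity $\mathbf{a}^{\mathsf T}L\,\mathbf{a}=\sum_{\{v,w\}}(a_v-a_w)^2$ with the sum over (unoriented) edges; thus $L\mathbf{a}=0$ forces $a_v=a_w$ for every edge $e=[v,w]$, and connectedness of $G$ then forces $\mathbf{a}$ to be constant. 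This gives $\dim_{\mathbb{R}}\ker_{\mathbb{R}}(L)=1$, hence $\dim_{\mathbb{F}}\ker_{\mathbb{F}}(L)=1$, hence $\ker_{\mathbb{F}}(L)=\mathbb{F}\cdot\mathbf{1}$, as required.

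I do not expect a genuine obstacle here; the only subtlety is that $\mathbb{F}$ is an arbitrary field of characteristic $0$, which need not be orderable, so the positive-definiteness argument cannot be applied over $\mathbb{F}$ directly — this is precisely why I first pass to $\mathbb{Q}$ (exploiting that $L$ has integer entries and that rank is insensitive to field extension) and only then to $\mathbb{R}$. If one prefers to avoid $\mathbb{R}$ altogether, the same conclusion over $\mathbb{Q}$ follows from the maximum principle: a non-constant harmonic $\mathbf{a}$ would attain its maximum value at a vertex $v_0$ having some neighbour with strictly smaller value (by connectedness), and then $\deg(v_0)\,a_{v_0}=\sum_{w\sim v_0}a_w<\deg(v_0)\,a_{v_0}$ is absurd. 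Either route closes the argument and yields $W\cap T=0$.
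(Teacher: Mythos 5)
Your proof is correct and covers essentially the same ground as the paper: the reduction of $W\cap T=0$ to the statement that the kernel of the combinatorial Laplacian is spanned by $\mathbf{1}$, together with the observation that the rank of an integer matrix is insensitive to the choice of characteristic-zero field, is exactly the paper's strategy. Your primary route (positive semidefiniteness of $L$ over $\mathbb{R}$ via $\mathbf{a}^{\mathsf T}L\,\mathbf{a}=\sum(a_v-a_w)^2$) coincides with the paper's alternative proof given in the remark using $A=DD^{t}$, while your sketched alternative via the maximum principle is precisely the paper's main proof, which passes to a totally ordered field and derives a contradiction at a vertex where the minimum is attained.
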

\begin{proof}
An element $(a_e)_{e\in\mathscr{E}}$ which belongs to $W$ and to  $T$ is described by the following equations 
$$a_e=a_v-a_w$$
$$\forall v \in \mathscr{V}\;\; \sum_{e\in \mathscr{E}_v}a_e=0.$$
We can rewrite the equations as follows:
\begin{equation}\label{deg}
\forall v \in \mathscr{V} \;\;\mathrm{deg}(v)a_v=a_{w_1}+\dots +a_{w_{s_v}}
\end{equation}
where $w_1, \dots, w_{s_v}$ are the vertices connected to $v$ by an edge and by $\mathrm{deg}(v):=s_v$ we denote the 
cardinality of the set of the vertices connected to $v$. Requiring that $W\cap T=0$ is equivalent to requiring that 
the linear system in (\ref{deg}) has a $1$-dimensional space of solutions, generated by the vector $(1,\dots, 1)$. This 
is equivalent to requiring that the matrix associated to the system in (\ref{deg}) has rank $n-1$, i.e. that there 
exists at least one minor of rank $n-1$ whose  determinant is non-zero.\\ 
This last condition is independent of the field $\mathbb{F}$, hence to prove that $W\cap T=0$ it is enough to prove that the 
equations in (\ref{deg}) imply that  $a_{v}=a_{w_i}$ for all $w_i$ and for all $v$ assuming that $\mathbb{F}$ is a totally 
ordered field.
We assume in what follows that $\mathbb{F}$ is a totally ordered field of characteristic $0$. 
Let us suppose by absurd that the equations in (\ref{deg}) do not imply that $a_{v}=a_{w}$ for all $w$. 
Let us call 
$$a_{v_0}=\mathrm{min}_{v\in \mathscr{V}}a_v$$
which exists because our assumption that our field $\mathbb{F}$ is totally ordered; then $a_{v_0}\leq a_{v}$ for all 
$v$ $\in$ $\mathscr{V}$. If $a_{v_0}= a_{v}$ for all $v$ $\in$ $\mathscr{V}$  we are done, if not there exists 
$v_1$ such that $a_{v_0}<a_{v_1}$. Moreover we can suppose that $v_1$ is connected to $v_0$ by an edge because if not, 
then this means that $a_{v_0}=a_{v}$ for all $v$ connected to $v_0$ by an edge. Then if we now fix a $v\neq v_0$ 
that is connected to $v_0$, we can consider all the $w$ that are connected to it by an edge; 
if $a_{v}=a_{w}$ for all these $w$ we can go on as before. In the end we will find that all the $a_v$ are equal for 
all  $v$ $\in$ $\mathscr{V}$ which proves the claim. \\
Hence we suppose that there exists $v_1$ such that $a_{v_0}<a_{v_1}$ for $v_1$ connected to $v_0$ by an edge. We consider 
the equation (\ref{deg}) for $v=v_0$ and we get the contradiction  
$$\mathrm{deg}(v_0)a_{v_0}<a_{w_1}+\dots a_{w_{s_v}}.$$ 
%because there exists a $w_i$ such that $a_{v_1}=a_{w_{i}}$.
\end{proof}

With this proposition we end the proof of the exactness of the invariant cycles sequence for trivial coefficients.

\begin{rmk}
 {\rm We'd like now to give another proof of proposition \ref{WT} more in the spirit of graph theory: it uses  
proposition 4.3, proposition 
4.8 of \cite{Bi}, and lemma 13.1.1 of \cite{GoRo}.}

\begin{proof}
The matrix associated to the linear system in (\ref{deg}) is an $n\times n$ matrix $A=(a_{i,j})$, 
where for $i\neq j$ $a_{i,j}=-1h_{i,j}$ if there are $h_{i,j}$ edges between the vertex $v_i$ and $v_j$ and $0$ otherwise, 
and $a_{i,i}=\mathrm{deg}(v_i).$
We will prove that the rank of the matrix $A$ is $n-1$.\\
The matrix $A$ is called the Laplacian matrix associated to the graph $G$; we will see that $A=DD^{t}$ and that 
$D$ is an $n\times m $ matrix with rank $n-1$.\\
The following are equivalent:
\begin{itemize}
\item (i) there exists an $(n-1)\times(n-1)$ minor of $A$ with determinant different from zero,  
\item (ii) the rank of $A$ is $(n-1)$ dimensional,
\item (iii) the dimension of the $\mathrm{Kernel}$ of $A$ is $1$,
\item (iv) $\mathrm{Kernel}(D^t)=\mathrm{Kernel}(A).$
 
\end{itemize}
Assertion (i) is independent from the field $\mathbb{F}$,  so we can suppose that $\mathbb{F}$ is the field $\mathbb{R}$.\\
We will prove assertion (iv).\\
Let us suppose that $z$ is a vector in $\mathbb{R}^n$ that is in the  $\mathrm{Kernel}(A)$, we want to prove that 
$z$ $\in$ $\mathrm{Kernel}(D^t)$. Being $z$ $\in$ $\mathrm{Kernel} (A)$, then 
$$Az=0,$$
$$DD^{t}z=0$$
$$z^{t}DD^{t}z=0.$$
But the last equality implies that the vector $D^{t}z$ has inner product with itself in $\mathbb{R}^n$ equal to zero, 
that means that $D^{t}z$ is the zero vector, i.e. $z$ $\in$ $\mathrm{Kernel}(D^t), $ as we wanted.\\
We are left to prove that  $A=DD^{t}$ and that $D$ is an $n\times m $ matrix with rank $n-1$.\\
We consider the matrix $D$ associated to the graph $G$ defined as follows: $D$ is $n\times m$ matrix such that  
$(D)_{i,j}=1$ if the vertex $v_i$ is such that $e_j=[v_i,-]$, $(D)_{i,j}=-1$ if the vertex $v_i$ is such that 
$e_j=[-,v_i]$, and $(D)_{i,j}=0$ otherwise.\\ 
Now if we consider $(DD^{t})_{i,j}$, this is the inner product of the rows $\textbf{d}_i$ and $\textbf{d}_j$. They have 
a non zero entry in the same column if and only if there is an edge between $v_i$ and $v_j$, and these entries are 
one $-1$ and one $+1$, hence $(DD^{t})_{i,j}$ is given by $-1$ times the number of edges between $v_i$ and $v_j$. 
Moreover $(DD^{t})_{i,i}$ is the number of entries in $\textbf{d}_i$ different from zero, which means the degree of $v_i$. 
This proves that $A=DD^{t}$.\\
Let us see now that $D$ has rank $n-1$.\\
On every column there is a $+1$ and a $-1$, hence the sum of all the elements on the columns are zero, hence the rank of 
$D$ is less or equal to $n-1$. Let us suppose to have a linear relation
\begin{equation}\label{sum} 
\sum_ia_i\textbf{d}_i=0,
\end{equation}
where as before $\textbf{d}_i$ is the row corresponding to the vertex $v_i$ and suppose that not all the $a_i$ are zero. Choose a row $\textbf{d}_k$ for which $a_k\neq 0$. This row has non zero entries in the columns corresponding to the edges that intersect $v_i$. For every such column there is only on other row $\textbf{d}_l$ with a non zero entry in that column. Hence we should have that $a_l=a_k$, hence $a_l=a_k$ for all vertices $v_l$ adjacent to $v_k$. Hence all the $a_k$ are equal, being the graph $G$ connected, and the equation in (\ref{sum}) is a multiple of $\sum_i\textbf{d}_i=0.$ But $(a_1,\dots,a_n)$ that verifies (\ref{sum}) is in $\textrm{Kernel}(D^t)$, hence we have proven that $\textrm{Kernel}(D^t)$ is $1$-dimensional and generated by $(1,\dots,1)$, the rank is $(n-1)$-dimensional and as well as the rank of $D$.
    
\end{proof}

\end{rmk}

\section{Unipotent coefficients}
In this section we study the sequence in (\ref{BC}) when the coefficients are unipotent $F$-isocrystals. In particular we 
prove that, 
unlike the case of constant coefficients, the sequence in (\ref{BC}) is not necessarily exact. We give a sufficient 
condition for 
non exactness.\\

Let $E$ be a unipotent convergent $F$-isocrystal for which the sequence in (\ref{BC}) is exact and  let us consider the 
following extension in the category of convergent $F$-isocrystals
\begin{equation}\label{exactsequenceiso}
0\rightarrow E\stackrel{\alpha}{\rightarrow} F \stackrel{\beta}{\rightarrow} \mathcal{O}\rightarrow 0
\end{equation}
where $\mathcal{O}$ is the trivial $F$-isocrystal. 
Let us also consider the element $x \in H^1_{\mathrm{rig}}(X_k, E)$ corresponding to the class of this extension 
($x$ is then fixed by the Frobenius operator; see propositions 1.3.1   and 3.2.1 of \cite{ChLeS}) 
Let us suppose that $x\neq 0$.\\
In the sequel we use sequence ($\ref{BC}$) for the isocrystals $E$, $F$ and $\mathcal{O}$; to avoid confusion we 
denote the first maps by 
$\phi^*_{\mathcal{E}}$, $\phi^*_{\mathcal{F}}$ and $\phi^*_{\mathcal{O}}$ respectively and the monodromy operators by 
$N_{\mathcal{E}}$, $N_{\mathcal{F}}$ and $N_{\mathcal{O}_X}$ respectively.

Our assumptions imply that $H^1_{\rm rig}(X_k, E)\otimes K$ is isomorphic via $\varphi^\ast_{\mathcal{E} }$
to  ${\rm Ker}(N_{\mathcal{E}})$, and this last group contains the image of $N_{\mathcal{E}}$, as this operator has square zero.
 
\begin{teo} \label{VIP}
If $\varphi_{\mathcal{E}}^*(x\otimes 1)=N_{\mathcal{E}}(y)$ for $y\in H^1_{\mathrm{dR}}(X_K, (\mathcal{E}, \nabla)_K),$ then if we denote by 
$\alpha_{\mathrm{dR}}:H^1_{\mathrm{dR}}(X_K, (\mathcal{E}, \nabla)_K)\rightarrow H^1_{\mathrm{dR}}(X_K, (\mathcal{E}, \nabla)_K)$ the 
map induced by $\alpha$ in the  sequence \label{exactsequence}, the following holds:
%$\alpha_{\mathrm{log-crys}}(y)\in H^1_{\mathrm{log-crys}}(X_k, \mathcal{F})$ is such that $N_{\mathcal{F}}(\alpha_{\mathrm{log-crys}}(y))=0$,
$$\mathrm{Kernel} (N_{\mathcal{F}})=\bigl(H^1_{\mathrm{rig}}(X_k, F)\otimes K\bigr)\oplus K \alpha_{\mathrm{log-crys}}(y)$$

\end{teo}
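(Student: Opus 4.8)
The plan is to analyze the long exact sequences attached to the short exact sequence (\ref{exactsequenceiso}) both in rigid cohomology over $Y_K$ and in de Rham cohomology over $X_K^{\mathrm{rig}}$, and to compare them via $\varphi^*$ and the monodromy operators, using the fact (Lemma \ref{Nphi}, Lemma \ref{injectivity}) that $\varphi^*$ is injective with image inside $\mathrm{Ker}(N)$, together with the naturality of $N$ with respect to morphisms of isocrystals. First I would write down the diagram whose rows are the cohomology sequences of (\ref{exactsequenceiso}): the top row
$$ H^0_{\mathrm{rig}}(X_k,\mathcal{O})\otimes K \xrightarrow{\partial} H^1_{\mathrm{rig}}(X_k,E)\otimes K \xrightarrow{\alpha_{\mathrm{rig}}} H^1_{\mathrm{rig}}(X_k,F)\otimes K \xrightarrow{\beta_{\mathrm{rig}}} H^1_{\mathrm{rig}}(X_k,\mathcal{O})\otimes K $$
and the bottom row the analogous sequence for $H^1_{\mathrm{dR}}(X_K^{\mathrm{rig}},-)$. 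The connecting map $\partial$ sends $1$ to the extension class, which base-changes to $x\otimes 1$; so the hypothesis $\varphi^*_{\mathcal{E}}(x\otimes 1)=N_{\mathcal{E}}(y)$ says precisely that the image of $x\otimes 1$ in $\mathrm{Ker}(N_{\mathcal{E}})\cong H^1_{\mathrm{rig}}(X_k,E)\otimes K$ (identification by Lemma \ref{injectivity} and the exactness hypothesis on $E$) is already in $\mathrm{Im}(N_{\mathcal{E}})$.

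The core computation is then a dimension count / direct sum decomposition for $\mathrm{Ker}(N_{\mathcal{F}})$. I would argue as follows. Since $N$ is functorial in the isocrystal, we have $\alpha_{\mathrm{dR}}\circ N_{\mathcal{E}} = N_{\mathcal{F}}\circ \alpha_{\mathrm{dR}}$ and $\beta_{\mathrm{dR}}\circ N_{\mathcal{F}} = N_{\mathcal{O}}\circ \beta_{\mathrm{dR}}$. Consider $z\in\mathrm{Ker}(N_{\mathcal{F}})$. Its image $\beta_{\mathrm{dR}}(z)$ lies in $\mathrm{Ker}(N_{\mathcal{O}})$, which by Theorem \ref{teoBC} equals $\mathrm{Im}(\varphi^*_{\mathcal{O}}) = H^1_{\mathrm{rig}}(X_k,\mathcal{O})\otimes K$; and the image of $\varphi^*_{\mathcal{F}}$ (which sits inside $\mathrm{Ker}(N_{\mathcal{F}})$) surjects onto the part of $H^1_{\mathrm{rig}}(X_k,\mathcal{O})\otimes K$ that is the image of $\beta_{\mathrm{rig}}$. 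The failure of $\beta_{\mathrm{rig}}$ to be surjective is controlled by $x\neq 0$: the cokernel of $\beta_{\mathrm{rig}}$ is one-dimensional (or at least contains $\mathbb{Z}_p\cdot(\text{image of }1\in H^2_{\mathrm{rig}}(X_k,E)$ dual) — more precisely, since $H^0_{\mathrm{rig}}(X_k,\mathcal{O})\otimes K\to H^1_{\mathrm{rig}}(X_k,E)\otimes K$ hits $x\otimes 1\neq 0$, the sequence tells us $\mathrm{Im}(\alpha_{\mathrm{rig}})$ is a proper subspace and $\beta_{\mathrm{rig}}$ fails surjectivity only through the class $x$). Thus $H^1_{\mathrm{rig}}(X_k,F)\otimes K$ accounts for $\mathrm{Ker}(N_{\mathcal{F}})$ except for one extra dimension, and I would exhibit $\alpha_{\mathrm{dR}}(y)$ (equivalently $\alpha_{\mathrm{log-crys}}(y)$ under Proposition \ref{propFalt}) as a class that lies in $\mathrm{Ker}(N_{\mathcal{F}})$ — because $N_{\mathcal{F}}(\alpha_{\mathrm{dR}}(y)) = \alpha_{\mathrm{dR}}(N_{\mathcal{E}}(y)) = \alpha_{\mathrm{dR}}(\varphi^*_{\mathcal{E}}(x\otimes 1)) = \varphi^*_{\mathcal{F}}(\alpha_{\mathrm{rig}}(x\otimes 1)) = \varphi^*_{\mathcal{F}}(0) = 0$, the vanishing $\alpha_{\mathrm{rig}}(x\otimes 1)=0$ coming from exactness of the top row at $H^1_{\mathrm{rig}}(X_k,E)\otimes K$ together with $x\otimes 1\in\mathrm{Im}(\partial)$ — and which is not in $H^1_{\mathrm{rig}}(X_k,F)\otimes K = \mathrm{Im}(\varphi^*_{\mathcal{F}})$, since if it were, pulling back along the injective $\varphi^*_{\mathcal{F}}$ would force $\alpha_{\mathrm{rig}}(x\otimes 1)$ to be hit, contradicting $x\neq 0$ after chasing the diagram (here one uses that $\varphi^*_{\mathcal{E}}(x\otimes 1)=N_{\mathcal{E}}(y)\neq 0$ would be needed, or rather that $y\notin \mathrm{Im}(\varphi^*_{\mathcal{E}})$, again by Lemma \ref{Nphi}).

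Assembling: $\mathrm{Ker}(N_{\mathcal{F}})\supseteq \bigl(H^1_{\mathrm{rig}}(X_k,F)\otimes K\bigr) + K\,\alpha_{\mathrm{log-crys}}(y)$ with the sum direct (the intersection is $0$ by the argument that $\alpha_{\mathrm{dR}}(y)\notin\mathrm{Im}(\varphi^*_{\mathcal{F}})$), and the reverse inclusion follows from a dimension count: $\dim\mathrm{Ker}(N_{\mathcal{F}}) = \dim H^1_{\mathrm{rig}}(X_k,F)\otimes K + 1$, obtained by combining the rigid-cohomology exact sequence for (\ref{exactsequenceiso}) (giving $\dim H^1_{\mathrm{rig}}(X_k,F)$ in terms of $\dim H^1_{\mathrm{rig}}(X_k,E)$, $\dim H^1_{\mathrm{rig}}(X_k,\mathcal{O})$ and the rank of $\beta_{\mathrm{rig}}$, which drops by one because $x\neq 0$) with the exactness of (\ref{BC}) for $E$ and for $\mathcal{O}$ and the snake/long-exact-sequence comparison of $N_{\mathcal{E}}, N_{\mathcal{F}}, N_{\mathcal{O}}$. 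I expect the main obstacle to be this final bookkeeping: establishing that the monodromy operators fit into a short exact sequence of complexes (or at least that the relevant kernels and cokernels form an exact sequence) so that $\dim\mathrm{Ker}(N_{\mathcal{F}})$ is pinned down exactly, rather than merely bounded — this requires care because $N$ is only defined on $H^1$, not on a full complex, so one must argue directly with the three instances of diagram (\ref{bigd}) and the functoriality of all the maps $\theta$, $\mathrm{Res}$, $\gamma$ in it with respect to $\alpha$ and $\beta$.
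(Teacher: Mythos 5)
Your first half does match the paper: you set up the same commutative diagram of long exact sequences in rigid and de Rham cohomology, you get $N_{\mathcal{F}}(\alpha_{\mathrm{dR}}(y))=\alpha_{\mathrm{dR}}(N_{\mathcal{E}}(y))=\alpha_{\mathrm{dR}}(\varphi^*_{\mathcal{E}}(x\otimes 1))=0$ by functoriality, and your sketch of why $\alpha_{\mathrm{dR}}(y)\notin\mathrm{Im}(\varphi^*_{\mathcal{F}})$ is the right chase (the paper completes it: if $\alpha_{\mathrm{dR}}(y)=\varphi^*_{\mathcal{F}}(b)$ then $\beta_{\mathrm{rig}}(b)=0$, so $b=\alpha_{\mathrm{rig}}(a)$ and $y-\varphi^*_{\mathcal{E}}(a)\in\mathrm{Im}(\delta^0_{\mathrm{dR}})=K\,\varphi^*_{\mathcal{E}}(x\otimes 1)$, forcing $N_{\mathcal{E}}(y)=0$ and contradicting $x\neq 0$). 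So the inclusion $\supseteq$ and the directness of the sum are essentially secured.

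The genuine gap is in the reverse inclusion. First, your bookkeeping for $\beta_{\mathrm{rig}}$ conflates the two ends of the long exact sequence: $x=\delta^0_{\mathrm{rig}}(1)\neq 0$ controls $\mathrm{Ker}(\alpha_{\mathrm{rig}})$, whereas the cokernel of $\beta_{\mathrm{rig}}$ is governed by the connecting map $\gamma_{\mathrm{rig}}:H^1_{\mathrm{rig}}(X_k)\otimes K\to H^2_{\mathrm{rig}}(X_k,E)\otimes K$, about which $x\neq 0$ says nothing; your claim that ``$\beta_{\mathrm{rig}}$ fails surjectivity only through the class $x$'' is therefore unfounded, and in fact false. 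The essential missing ingredient is the paper's Lemma \ref{zeromap}: $\gamma_{\mathrm{rig}}=0$, i.e.\ $\beta_{\mathrm{rig}}$ \emph{is} surjective. That is exactly what allows one to take $\alpha\in\mathrm{Ker}(N_{\mathcal{F}})$, write $\beta_{\mathrm{dR}}(\alpha)=\varphi^*_{\mathcal{O}}(\gamma)$ using the trivial-coefficient invariant cycles theorem, lift $\gamma$ to some $\beta\in H^1_{\mathrm{rig}}(X_k,F)\otimes K$, and then push $\alpha-\varphi^*_{\mathcal{F}}(\beta)$ into $\mathrm{Im}(\alpha_{\mathrm{dR}})$ and finish by a direct chase --- no dimension count is needed, and none would close without this lemma, since (as you concede) $N$ does not sit in a short exact sequence of complexes from which $\dim\mathrm{Ker}(N_{\mathcal{F}})$ could be extracted formally. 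The vanishing of $\gamma_{\mathrm{rig}}$ is itself the hard point: the paper proves it by identifying $H^2_{\mathrm{rig}}(X_k,G)\otimes K$ with $\oplus_v H^2_{\mathrm{dR}}(Z_v,(\mathcal{G},\nabla)_K)$ via Mayer--Vietoris and exploiting that the extension $0\to E\to F\to\mathcal{O}\to 0$ splits on every wide open $X_v$, precisely because its class $\varphi^*_{\mathcal{E}}(x\otimes 1)=N_{\mathcal{E}}(y)$ is represented by a hypercocycle of the form $(0_v,f_e)$ supported on the edges. This splitting argument, which is where the hypothesis $\varphi^*_{\mathcal{E}}(x\otimes 1)\in\mathrm{Im}(N_{\mathcal{E}})$ is used a second time, is absent from your proposal.
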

\begin{proof}
Let us consider the following commutative diagram
\begin{equation}\label{bigbigbig}
\xymatrix{
H^0_{\mathrm{rig}}(X_k)\otimes K\ar[r]^(0.4){i^0_{\mathcal{O}}}\ar[d]^{\delta^0_{\mathrm{rig}}}& H^0_{\mathrm{dR}}(X_K)\ar[r]^{N^0_{\mathcal{O}}}\ar[d]^{\delta^0_{\mathrm{log-crys}}}& H^0_{\mathrm{dR}}(X_K)\ar[d]^{\delta^0_{\mathrm{dR}}}\\
H^1_{\mathrm{rig}}(X_k, E)\otimes K\ar[r]^(0.4){\varphi^*_{\mathcal{E}}}\ar[d]^{\alpha_{\mathrm{rig}}}&H^1_{\mathrm{dR}}(X_K, (\mathcal{E},
\nabla)_K)\ar[r]^{{N_{\mathcal{E}}}}\ar[d]^{\alpha_{\mathrm{dR}}}&H^1_{\mathrm{dR}}(X_K, (\mathcal{E},
\nabla)_K)       \ar[d]^{\alpha_{\mathrm{dR}}}\\
H^1_{\mathrm{rig}}(X_k, F)\otimes K\ar[r]^(0.4){\varphi^*_{\mathcal{F}}}\ar[d]^{\beta_{\mathrm{rig}}}&H^1_{\mathrm{dR}}(X_K, (\mathcal{F},
\nabla)_K)\ar[r]^{{N_{\mathcal{F}}}}\ar[d]^{\beta_{\mathrm{dR}}}&H^1_{\mathrm{dR}}(X_K, (\mathcal{F},
\nabla)_K)\ar[d]^{\beta_{\mathrm{dR}}}\\
H^1_{\mathrm{rig}}(X_k)\otimes K\ar[r]^(0.4){\varphi^*_{\mathcal{O}}} \ar[d]^{\gamma_{\mathrm{rig}}}&
H^1_{\mathrm{dR}}(X_K)\ar[r]^{{N_{\mathcal{O}}}}\ar[d]^{\gamma_{\mathrm{dR}}}&H^1_{\mathrm{dR}}(X_K)\otimes K \ar[d]^{\gamma_{\mathrm{dR}}}\\
H^2_{\mathrm{rig}}(X_k, E)\otimes K\ar[r]^(0.4){i^2_{\mathcal{E}}}\ar[d]&H^2_{\mathrm{dR}}(X_K, (\mathcal{E},
\nabla)_K) \ar[r]^{N^2_{\mathcal{E}}}\ar[d]&H^2_{\mathrm{dR}}(X_K, (\mathcal{E},
\nabla)_K)\ar[d]\\
H^2_{\mathrm{rig}}(X_k, F)\otimes K\ar[r]\ar[d]&H^2_{\mathrm{dR}}(X_K, (\mathcal{F},
\nabla)_K)\ar[r]\ar[d]&H^2_{\mathrm{dR}}(X_K, (\mathcal{F},
\nabla)_K)\ar[d]\\
H^2_{\mathrm{rig}}(X_k)\otimes K\ar[r]&H^2_{\mathrm{dR}}(X_K)\ar[r]&H^2_{\mathrm{dR}}(X_K).}
\end{equation}
Let $\varphi_{\mathcal{E}}^*(x\otimes 1)\in \varphi_{\mathcal{E}}^*(H^1_{\mathrm{rig}}(X_k, E)\otimes K)=
\mathrm{Ker}(N_{\mathcal{E}})$, with $\varphi_{\mathcal{E}}^*(x\otimes 1)=N_{\mathcal{E}}(y)$ and 
$y\in H^1_{{\rm dR}}(X_K, 
(\mathcal{E}, \nabla)_K)$. One can notice that the class of $1$ in $H^0_{\mathrm{rig}}(X_k)\otimes K=K$ is sent 
to $x\otimes 1$ in  $H^1_{\mathrm{rig}}(X_k, E)\otimes K$ by the map $\delta^0_{\mathrm{rig}}$. \\
Let us prove first that $N_{\mathcal{F}}(\alpha_{\mathrm{dR}}(y))=0$. \\
By the commutativity of the diagram (\ref{bigbigbig}) we have that  
$$N_{\mathcal{F}}(\alpha_{\mathrm{dR}}(y))=\alpha_{\mathrm{dR}}(N_{\mathcal{E}}(y))=
\alpha_{\mathrm{dR}}(\varphi_{\mathcal{E}}^*(x\otimes 1))=\alpha_{\mathrm{dR}}(\delta^0_{\mathrm{dR}}(1))=0,$$
hence $\alpha_{\mathrm{dR}}(y)\in \mathrm{Ker}(N_{\mathcal{F}}).$ \\
We claim that $z=\alpha_{\mathrm{dR}}(y)\notin \varphi^*_{\mathcal{F}}\bigl(H^1_{\mathrm{rig}}(X_k, F)\otimes K\bigr).$
Let us suppose that $z=\alpha_{\mathrm{dR}}(y)=\varphi^*_{\mathcal{F}}(b)$, with $b\in H^1_{\mathrm{rig}}(X_k, F)\otimes K$, 
then 
$$\varphi^*_{\mathcal{O}}(\beta_{\mathrm{rig}}(b))=\beta_{\mathrm{dR}}(\varphi^*_{\mathcal{F}}(b))=\beta_{\mathrm{dR}}(z)=
\beta_{\mathrm{dR}}(\alpha_{\mathrm{dR}}(y))=0.$$
As $\varphi^*_{\mathcal{O}}$ is injective we have $\beta_{\mathrm{rig}}(b)=0$, hence $b \in \mathrm{Ker} (\beta_{\mathrm{rig}})=
\mathrm{Im} (\alpha_{\mathrm{rig}})$, i.e. there exists $a\in H^1_{\mathrm{rig}}(X_k, E)\otimes K$ such that 
$\alpha_{\mathrm{rig}}(a)=b.$ So 
$$z=\alpha_{\mathrm{dR}}(y)=\varphi^*_{\mathcal{F}}(b)=\varphi^*_{\mathcal{F}}(\alpha_{\mathrm{rig}}(a))=\alpha_{\mathrm{dR}}
(\varphi^*_{\mathcal{E}}(a)),$$
from which it follows that 
$$y-\varphi^*_{\mathcal{E}}(a) \in \mathrm{Ker}( \alpha_{\mathrm{dR}})=\mathrm{Im} (\delta^0_{\mathrm{dR}}).$$
But the image of $\delta^0_{\mathrm{dR}}$ is generated by $\varphi^*_{\mathcal{E}}(x\otimes 1)$, as vector space, 
hence $y-\varphi^*_{\mathcal{E}}(a)=m\varphi^*_{\mathcal{E}}(x\otimes 1)$ for some $m\in K$.\\
Now 
$$N_{\mathcal{E}}(y)-N_{\mathcal{E}}(\varphi^*_{\mathcal{E}}(a))=N_{\mathcal{E}}(m\varphi^*_{\mathcal{E}}(x\otimes 1))=0,$$
hence
$$N_{\mathcal{E}}(y)=N_{\mathcal{E}}(\varphi^*_{\mathcal{E}}(a))=0,$$
but 
$$N_{\mathcal{E}}(y)=\varphi^*_{\mathcal{E}}(x\otimes 1)=0,$$
which is absurd.\\
We are left to prove that $\forall \alpha \in \mathrm{Ker} N_{\mathcal{F}}$ there exists 
$\beta \in H^1_{\mathrm{rig}}(X_k, F)\otimes K$ 
and $t\in K$ such that $\alpha=\varphi^*_{\mathcal{F}}(\beta)+t\alpha_{\mathrm{dR}}(y).$ 
Let us calculate 
$$N_{\mathcal{O}}(\beta_{\mathrm{dR}}(\alpha))=\beta_{\mathrm{dR}}(N_{\mathcal{F}}(\alpha)))=0,$$
hence 
$$\beta_{\mathrm{dR}}(\alpha) \in \mathrm{Ker} (N_{\mathcal{O}})=\mathrm{Im} (\varphi^*_{\mathcal{O}}),$$
so that there exists $\gamma \in H^1_{\mathrm{rig}}(X_k)\otimes K$ such that $\varphi^*_{\mathcal{O}}(\gamma)=
\beta_{\mathrm{dR}}(\alpha)$. 
%Then
%$$i^2_{\mathcal{E}}(\gamma_{\mathrm{rig}}(\gamma))=\gamma_{\mathrm{log-crys}}(\varphi^*_{\mathcal{O}}(\gamma))=\gamma_{\mathrm{log-crys}}(\beta_{\mathrm{log-crys}}(\alpha))=0.$$ 
By lemma \ref{zeromap} we have $\gamma_{\mathrm{rig}}(\gamma)=0$. Hence there exists 
$\beta \in H^1_{\mathrm{rig}}(X_k, F)\otimes K$ such 
that $\beta_{\mathrm{rig}}(\beta)=\gamma.$
Let us consider now the element $\alpha-\varphi^*_{\mathcal{F}}(\beta);$ it is in the $\mathrm{Kernel}$ of 
$\beta_{\mathrm{dR}},$ because
$$\beta_{\mathrm{dR}}(\alpha-\varphi^*_{\mathcal{F}}(\beta))=\beta_{\mathrm{dR}}(\alpha)-\varphi^*_{\mathcal{O}}(\beta_{\mathrm{rig}}(b))=
\beta_{\mathrm{dR}}(\alpha)-\varphi^*_{\mathcal{O}}(\gamma)=0.$$
Hence there exists $u\in H^1_{\mathrm{dR}}(X_K, (\mathcal{E},\nabla_\mathcal{E})_K)$ such that 
$\alpha_{\mathrm{dR}}(u)=\alpha-\varphi^*_{\mathcal{F}}(\beta).$ Now 
$$\alpha_{\mathrm{dR}}(N_{\mathcal{E}}(u))=N_{\mathcal{F}}(\alpha_{\mathrm{dR}}(u))=N_{\mathcal{F}}(\alpha-\varphi^*_{\mathcal{F}}
(\beta))=0$$
because $\alpha \in \mathrm{Ker}( N_{\mathcal{F}})$ and $N_{\mathcal{F}}(\varphi^*_{\mathcal{F}}(\beta))=0$ 
by lemma \ref{Nphi}. Then $N_{\mathcal{E}}(u)\in \mathrm{Ker} (\alpha_{\mathrm{dR}})={\rm Im}\, \delta^0_{\mathrm{dR}}$, i.e 
$N_{\mathcal{E}}(u)=t\varphi^*_{\mathcal{E}}(x\otimes 1)=tN_{\mathcal{E}}(y)$, for some $t\in K$ and $u-ty \in \mathrm{Ker} (N_{\mathcal{E}})=\varphi^*_{\mathcal{E}}(H^1_{\mathrm{rig}}(X_k, E)\otimes K)$. Hence there exists $\beta' \in H^1_{\mathrm{rig}}(X_k, E)\otimes K$ 
such that $u=ty+\varphi^*_{\mathcal{E}}(\beta').$ So 
$$\alpha-\varphi^*_{\mathcal{F}}(\beta)=\alpha_{\mathrm{dR}}(u)=\alpha_{\mathrm{dR}}(ty+\varphi^*_{\mathcal{E}}(\beta'))=
t\alpha_{\mathrm{dR}}(y)+\alpha_{\mathrm{dR}}(\varphi^*_{\mathcal{E}}(\beta'))$$
which means that
$$\alpha=\varphi^*_{\mathcal{F}}(\beta)+t\alpha_{\mathrm{dR}}(y)+\alpha_{\mathrm{dR}}(\varphi^*_{\mathcal{E}}(\beta')),$$
but  $\varphi^*_{\mathcal{F}}(\beta)+\alpha_{\mathrm{dR}}(\varphi^*_{\mathcal{E}}(\beta'))=\varphi^*_{\mathcal{F}}(\beta)+
\varphi^*_{\mathcal{F}}(\alpha_{\mathrm{rig}}(\beta')),$ hence we are done. 
\end{proof}

\begin{lemma}\label{zeromap} With the same hypothesis and notations as in the previous theorem, the co-boundary map 
$\gamma_{\mathrm{rig}}: H^1_{\rm rig}(X_k)\otimes K\longrightarrow H^2_{\rm rig}(X_k, E)\otimes K$ 
induced by the exact sequence (\ref{exactsequenceiso}) is the zero map.
%We consider the diagram 
%\begin{equation}\label{part}
%\xymatrix{
%H^1_{\mathrm{rig}}(X_k, \mathcal{O}_X)\ar[r]^{\varphi^*_{\mathcal{O}_X}} \ar[d]^{\gamma_{\mathrm{rig}}}& H^1_{\mathrm{dR}}(X_K,\mathcal{O}_X)\ar[d]^{\gamma_{\mathrm{log-crys}}}\\
%H^2_{\mathrm{rig}}(X_k,\mathcal{E})\ar[r]^{i^2_{\mathcal{E}}}& H^2_{\mathrm{dR}}(X_K,\mathcal{E}) 
%}
%\end{equation}
%which is part of diagram (\ref{bigbig}). Let $a$ be in $H^1_{\mathrm{rig}}(X_k,\mathcal{O}_X)$ such that %$\gamma_{\mathrm{log-crys}}(\varphi^*_{\mathcal{O}_X}(a))=0$, then $\gamma_{\mathrm{rig}}(a)=0$. 
\end{lemma}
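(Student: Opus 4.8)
The plan is to recognise $\gamma_{\mathrm{rig}}$ as a connecting homomorphism in a long exact sequence of $F$-isocrystals and then to kill it for weight reasons. Since \eqref{exactsequenceiso} is a short exact sequence of convergent $F$-isocrystals on $X_k$, the leftmost column of \eqref{bigbigbig}, namely
\[
\cdots\longrightarrow H^1_{\mathrm{rig}}(X_k,F)\otimes K\xrightarrow{\beta_{\mathrm{rig}}}H^1_{\mathrm{rig}}(X_k)\otimes K\xrightarrow{\gamma_{\mathrm{rig}}}H^2_{\mathrm{rig}}(X_k,E)\otimes K\longrightarrow\cdots
\]
is the base change to $K$ of the associated long exact cohomology sequence; in particular $\gamma_{\mathrm{rig}}$ is induced by a morphism of $F$-isocrystals over $K_0$. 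Hence it suffices to show that $H^1_{\mathrm{rig}}(X_k)$ has weights $\le 1$ while $H^2_{\mathrm{rig}}(X_k,E)$ is pure of weight $2$, for then the morphism, being compatible with the weight filtrations, kills all of its (weight $\le 1$) source.

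For the source: $X_k$ is a proper curve (of dimension $1$), so $H^i_{\mathrm{rig}}(X_k)$ has weights $\le i$ by the weight formalism in rigid cohomology (\cite{Ch99}, \cite{ChLeS}); in particular $H^1_{\mathrm{rig}}(X_k)$ has weights $\le 1$. One can also read this off \eqref{M-Vyrig} with trivial coefficients, which realises $H^1_{\mathrm{rig}}(X_k)$ as an extension of $\bigoplus_v H^1_{\mathrm{rig}}(C_v)$, pure of weight $1$, by $H^1(Gr(X_k))$, pure of weight $0$.

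For the target I would use that unipotence forces purity. Because $X_k$ has dimension $1$ we have $H^j_{\mathrm{rig}}(X_k,-)=0$ for $j\ge 3$; writing $E$ as an iterated extension of the trivial isocrystal $\mathcal O$ and chasing the long exact sequences, the assertion that $H^2_{\mathrm{rig}}(X_k,E)$ is pure of weight $2$ reduces to the case $E=\mathcal O$ (a quotient of a pure object is pure, and an extension of pure objects of the same weight is pure). Now in \eqref{M-Vyrig} the intersections $C_e$ are $0$-dimensional, so $H^1_{\mathrm{rig}}(C_e,-)=H^2_{\mathrm{rig}}(C_e,-)=0$ and Mayer--Vietoris gives $H^2_{\mathrm{rig}}(X_k,E)\cong\bigoplus_v H^2_{\mathrm{rig}}(C_v,E)$ for every $E$; in particular $H^2_{\mathrm{rig}}(X_k,\mathcal O)\cong\bigoplus_v H^2_{\mathrm{rig}}(C_v)$, pure of weight $2$. (Alternatively one bypasses the dévissage: by Poincaré duality on the smooth proper curves $C_v$, $\bigoplus_v H^2_{\mathrm{rig}}(C_v,E)$ is the dual Tate twist of $\bigoplus_v H^0_{\mathrm{rig}}(C_v,E^\vee)$, which is pure of weight $0$ since $E^\vee$ is again unipotent and $H^0$ of the unit object is pure of weight $0$.) Combining the two paragraphs gives $\gamma_{\mathrm{rig}}=0$.

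I expect the only real content to be the purity of the target, and it is exactly there that the unipotence of $E$ is used (for a general coefficient $H^2_{\mathrm{rig}}(X_k,E)$ need not be pure). Note that neither $x\ne 0$ nor the Frobenius-invariance of $x$ is needed for this particular lemma; if one prefers to keep $x$ in sight, the same conclusion reads: $\gamma_{\mathrm{rig}}$ is cup product with the Frobenius-fixed, hence weight $0$, class $x$, so its image lies in the weight $\le 1$ part of the pure weight $2$ group $H^2_{\mathrm{rig}}(X_k,E)$, which is zero. As the argument uses the weight filtration on rigid cohomology, it is carried out with $k$ finite, consistently with the rest of \S6; no such hypothesis enters the results of \S5.
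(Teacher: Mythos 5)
Your argument is a genuinely different route from the paper's, and it is essentially sound \emph{when $k$ is finite}: the identification $H^2_{\mathrm{rig}}(X_k,G)\cong\bigoplus_v H^2_{\mathrm{rig}}(C_v,G)$ via Mayer--Vietoris is exactly the computation the paper also makes, the d\'evissage showing purity of weight $2$ for unipotent coefficients is correct, and a Frobenius-equivariant map from a space of weights $\le 1$ to a pure weight-$2$ space must vanish. You even prove something stronger, since you never use the hypothesis $\varphi^*_{\mathcal{E}}(x\otimes 1)=N_{\mathcal{E}}(y)$ carried over from Theorem \ref{VIP}.

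The genuine gap is the finiteness of $k$. The weight formalism (Weil numbers, purity, weight bounds for $H^i_{\mathrm{rig}}$ of proper varieties) is only available over a finite residue field, and your closing remark that \S6 is ``carried out with $k$ finite'' is not accurate: nothing in \S6 assumes this, the standing hypothesis throughout the paper is only that $k$ is perfect, and removing the finiteness assumption is the paper's stated purpose (cf.\ the abstract and \S1). So as written your proof does not establish Lemma \ref{zeromap} in the generality in which it is stated and used, and no reduction to the finite-field case is offered (such a spreading-out argument for convergent $F$-isocrystals would itself require justification). The paper avoids weights entirely: it observes that the vanishing of $\gamma_{\mathrm{rig}}$ is equivalent to the injectivity of $j:H^2_{\mathrm{rig}}(X_k,E)\otimes K\to H^2_{\mathrm{rig}}(X_k,F)\otimes K$, computes both sides as $\bigoplus_v H^2_{\mathrm{dR}}(Z_v,\cdot)$, and then uses precisely the hypothesis $\varphi^*_{\mathcal{E}}(x\otimes 1)=N_{\mathcal{E}}(y)$ (so that the extension class is represented by a hypercocycle $(0_v,f_e)$) to split the sequence $0\to\mathcal{E}\to\mathcal{F}\to\mathcal{O}\to 0$ compatibly on each wide open $X_v$ and each residue disk $D_e$; the hypercocycle computation then decomposes and injectivity of each $j_v$ follows. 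In short: your weight argument buys independence from the hypothesis on $x$ at the cost of finiteness of $k$; the paper's splitting argument buys independence from finiteness at the cost of using that hypothesis. To repair your proof in the paper's setting you would need either the reduction to finite $k$ or an argument along the paper's lines.
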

\begin{proof}

Clearly, the vanishing of $\gamma_{\mathrm{rig}}$ is equivalent to the fact that the map
$j:H^2_{\rm rig}(X_k, E)\otimes K\lra H^2_{\rm rig}(X_k, F)\otimes K$ is {\bf injective}.

Let us first make more explicit the group $H^2_{\rm rig}(X_k, G)\otimes K$, where $G$ is any one of
the isocrystals $E, F, \mathcal{O}$ and $(\mathcal{G}, \nabla)$ is the module with integrable connection that it induces. 
Let us 
recall the notations of section \ref{Monodromy operator and rigid cohomology}:
we consider the diagram
$$
X_k\hookrightarrow P_k\stackrel{sp_{P_{\cal V}}}{\longleftarrow} P_K
$$
with $P_k$ smooth and let $Y_K:=sp_{P_{\cal V}}^{-1}(X_k)$. Then $H^i_{\rm rig}(X_k, G)\otimes K=
H^i_{\rm dR}(Y_K, (\mathcal{G}, \nabla)_K)$.

The relevant part of the Mayer-Vietoris exact sequence for the admissible 
covering $\{Y_v\}_v$ of $Y_K$ then reads
$$
\oplus_e H^1_{\rm dR}(Y_e, (\mathcal{G}, \nabla)_K)\longrightarrow H^2_{\rm dR}(Y_K,(\mathcal{G}, \nabla)_K) \longrightarrow 
\oplus_v H^2_{\rm dR}(Y_v, (\mathcal{G}, \nabla)_K)
\longrightarrow \oplus_e H^2_{\rm dR}(Y_e,(\mathcal{G}, \nabla)_K).
$$
As $Y_e$ is a wide open polydisk, $H^i_{\rm dR}(Y_e, (\mathcal{G}, \nabla)_K)=0$ for $i\ge 1$, therefore we have a 
natural isomorphism $H^2_{\rm dR}(Y_K, (\mathcal{G}, \nabla)_K) \cong \oplus_v H^2_{\rm dR}(Y_v, (\mathcal{G}, \nabla)_K).$ \\
Moreover, as $C_v$ which is the irreducible component of $X_k$ corresponding to 
$v$ was supposed smooth it follows that we have canonical isomorphisms
$H^i_{\rm dR}(Y_v, (\mathcal{G}, \nabla)_K)\cong H^i_{\rm crys}(C_{v}, G)\otimes K$. In particular,
if we denote by $Z_v$ a smooth proper curve over $K$ whose reduction is $C_{v}$ and
which contains the wide open $X_v$, then the isocrystal $G$ can be evaluated on $Z_v$ to give a 
sheaf with connection which we'll denote again by $(\mathcal{G}, \nabla)$. Then
$H^i_{\rm dR}(Y_v, (\mathcal{G}, \nabla)_K)\cong H^i_{\rm dR}(Z_v, (\mathcal{G}, \nabla)_K)$ for all $i\ge 0$.

Therefore we have a natural isomorphism $H^2_{\rm rig}(X_k, G)\otimes K\cong 
\oplus_v H^2_{\rm dR}(Z_v, (\mathcal{G}, \nabla)_K)$.

%Now we'd like to make explicit $\delta_0$, the following composition 
%$$
%H^1_{\rm dR}(X_K, \mathcal{O}_{X_K})\stackrel{\delta}{\longrightarrow}H^2_{\rm rig}(X_k, \mathcal{E})\cong 
%\oplus_e H^2_{\rm dR}(Z_v, E).
%$$
For every vertex $v$ we denote as before by $\mathscr{E}_v:=\{e \mbox{ such that there exists a vertex }w 
\mbox{ with } e=[v,w]\}$. 
For every $v$ and $e\in \mathscr{E}_v$ we denote by $D_e$ the residue disk  of the point in $C_v$
corresponding to $e$ in $Z_v$. Let us then remark that the family $\{ X_v, D_e\}_{e\in \mathscr{E}_v}$ is an admissible
covering of $Z_v$ and $X_v\cap D_e=X_e$ for every $e\in \mathscr{E}_v$. We will represent classes in
$H^2_{\rm dR}(Z_v, (\mathcal{G}, \nabla)_K)$ by hypercocycles for the above covering.\\

We now prove the injectivity of $j:H^2_{\rm rig}(X_k, E)\otimes K\lra H^2_{\rm rig}(X_k, F)\otimes K$.\\
 Let
$z\in H^2_{\mathrm{rig}}(X_k, E)\otimes K=\oplus_{v}H^2_{\rm dR}(Z_v, (\mathcal{E}, \nabla)_K)$ such that  
$j(z)=0$.  Let
$z_v\in H^2_{\rm dR}(Z_v, (\mathcal{E}, \nabla)_K)$ be the $v$-component of $z$ and 
$j_v:H^2_{\rm dR}(Z_v, (\mathcal{E}, \nabla)_K)\lra H^2_{\mathrm{dR}}(Z_v, (\mathcal{F}, \nabla)_K)$
be the $v$ component of $j$. Obviously $j_v(z_v)=0$ and it would be enough to show that this implies $z_v=0$ for every
$v$.

Let $\bigl(\omega_e  \bigr)_{e\in \mathcal{E}_v}$ be a $2$-hyper cocycle representing $z_v$, where
$\omega_e\in H^0(X_e, \mathcal{E}_K\otimes\Omega^1_{Z_v})$ for all $e$. Then $j_v(z_v)$ will be represented by
the $2$-hyper cocycle $\bigl(\alpha(\omega_e)  \bigr)_{e\in \mathscr{E}_v}$, where let us recall
$\alpha$ is defined by the exact sequence of isocrystals on $X_k$ below
$$
0\lra \mathcal{E}\stackrel{\alpha}{\lra}\mathcal{F}\stackrel{\beta}{\lra}\mathcal{O}\lra 0.
$$  
As extension on $X_K$ this is given by the class $\varphi^*_{\mathcal{E}}(x\otimes 1)=N_{\mathcal{E}}(y)\in H^1_{\rm dR}(X_K, 
(\mathcal{E}, \nabla)_K)$
and therefore, for every $v$, the sequence
$$
0\lra H^0(X_v, \mathcal{E}_K) \stackrel{\alpha}{\lra} H^0(X_v, \mathcal{F}_K) \stackrel{\beta}\lra H^0(X_v, 
\mathcal{O}_{X_K})\lra 0,
$$
is exact because $X_v$ are wide opens and moreover, it is naturally split as an exact sequence of $\cO_{X_v}$-modules with 
connections because $\varphi_{\cal E}^*(x\otimes 1)=N_{\mathcal{E}}(y)$ can be represented by $(0_v, f_e)$ with 
$f_e\in H^0_{\mathrm{dR}}(X_e, (\mathcal{E}, \nabla)_K)$. Let $s:H^0(X_v, \mathcal{O}_{X_K})\lra H^0(X_v,\mathcal{ F}_K)$ 
be such a section of
$\beta$. We remark that it is determined by $s(1)$, which is an element of $H^0_{\rm dR}(X_v, (\mathcal{F}, \nabla)_K)$
such that $\beta(s(1))=1$. 

Therefore, $s$ determines, for every $e\in \mathscr{E}_v$, a splitting of the exact sequence 
$$
0\lra H^0(X_e, \mathcal{E}_K)\stackrel{\alpha_e}{\lra} H^0(X_e, \mathcal{F}_K)\stackrel{\beta_e}{\lra} H^0(X_e, 
\mathcal{O}_{X_K})\lra 0
$$
which will also be called $s_e$ (it is determined by the element $s_e(1)=s(1)|_{X_e})$.

Now the sequence 
\begin{equation}\label{exactone}
\quad 0\lra H^0_{\rm dR}(X_e, (\mathcal{E}, \nabla)_K)\stackrel{\alpha_e}{\lra} H^0_{\rm dR}(X_e, (\mathcal{F}, \nabla)_K)
\stackrel{\beta_e}{\lra} 
H^0_{\rm dR}(X_e, (\mathcal{O}_{X_K}, d))\lra 0
\end{equation}
is exact and $s_e$ induces a natural splitting of it.

The isocrystal $G$ (which is any one of $E, F, \mathcal{O}$ regarded as 
a sheaf with connection on $Z_v$)
has a basis of horizontal sections on $D_e$, for every $e\in \mathscr{E}_v$. Therefore 
the natural restriction map $H^0_{\rm dR}(D_e, (\mathcal{G}, \nabla)_K)\lra H^0_{\rm dR}(X_e, (\mathcal{G}, \nabla)_K)$
is an isomorphism.
Thus  the exact sequence (\ref{exactone}) implies that the sequence
\begin{equation}\label{exactone2}
\quad 0\lra H^0_{\rm dR}(D_e, (\mathcal{E}, \nabla)_K)\stackrel{\alpha_e}{\lra} H^0_{\rm dR}(D_e, (\mathcal{F}, \nabla)_K)
\stackrel{\beta_e}{\lra} 
H^0_{\rm dR}(D_e, (\mathcal{O}_{X_K}, d))\lra 0
\end{equation}
is exact and naturally split, where we denote the splitting by $s_e$. By tensoring (\ref{exactone2})
with $\Omega^1_{D_e}$ we obtain that the
sequence
$$
0\lra H^0(D_e, \mathcal{E}_K\otimes \Omega^1_{D_e})\stackrel{\alpha_e}{\lra} H^0(D_e, \mathcal{F}_K \otimes\Omega^1_{D_e})
\stackrel{\beta_e}{\lra} 
H^0(D_e, \Omega^1_{D_e})\lra 0
$$
is exact, naturally split as sequence of $\cO_{D_e}$-modules with connection and everything is 
compatible with restriction to $X_e$.

Using these splittings, we write $H^0(X_v, \mathcal{F}_K\otimes\Omega^1_{X_v})=
H^0\bigl(X_v, \mathcal{E}_K\otimes\Omega^1_{X_v}\bigr)\oplus H^0\bigl(X_v, \Omega^1_{X_v}\bigr)$ and 
similarly for sections over $X_e$ and $D_e$.

Now we go back to proving that $j_v$ is injective for all $v$. Suppose that $j_v(z_v)=0$,
i.e. for every $e\in \mathscr{E}_v$, $\alpha_e(\omega_e)=\eta_v|_{X_e}-\rho_e|_{X_e}-\nabla(f_e)$,
where $\eta_v\in H^0\bigl(X_v, \mathcal{F}_K\otimes\Omega^1_{X_v}\bigr), \rho_e\in 
H^0\bigl(D_e, \mathcal{F}_K\otimes\Omega^1_{D_e}\bigr), f_e\in H^0\bigl(X_e, \mathcal{F}_K\bigr)$.

Using the decompositions above we write (uniquely): $\eta_v=\eta_{v,E}+\eta_{v,\cO}$, 
$\rho_e=\rho_{e,E}+\rho_{e,\cO}$ and $f_e=f_{e,E}+f_{e,\cO}$, with
$\eta_{v,E}\in H^0(X_v, \mathcal{E}_K\otimes\Omega^1_{X_v})$, $\rho_{e,E}\in H^0\bigl(D_e, \mathcal{E}_K\otimes
\Omega^1_{D_e}\bigr)$ etc.

Using the fact that the decompositions respect the connections and the restrictions to $X_e$, we obtain:
$$
\omega_e-\Bigl(\eta_{v,E}|_{X_e}-\rho_{e,E}|_{X_e}-\nabla(f_{e,E} ) \Bigr)=
\eta_{v,\cO_X}|_{X_e}-\rho_{e,\cO_X}|_{X_e}-d_X(f_{e,\cO}). 
$$
As the decomposition is a direct sum decomposition the LHS and the RHS are $0$.

Therefore $\omega_e=\eta_{v,E}|_{X_e}-\rho_{e,E}|_{X_e}-\nabla(f_{e,E})$ for every $e\in \mathscr{E}_v$
and we have $z_v$=0.
%{}
\end{proof}

\appendix
\section{An example for a Tate curve}

In this paragraph we use explicit calculations to confirm theorem \ref{VIP}, i.e.
that the  sequence (\ref{BC}) is not exact for a certain non-trivial unipotent $F$-isocrystal $E$ on 
a specific Tate curve. 

Let $X$ be a Tate elliptic curve over $K$ with invariant $q$, where 
$q \in m_\mathcal{V}=(\pi)$. 
%with $\mathrm{ord}_{\pi}q=3.$ 
%Then the 
%graph associate to 
%$X$ is a triangle with vertices $I,II,III$  and edges $[I,II], [II,III], [I,III]$.\\
We consider $x \in H^1_{\mathrm{rig}}(X_k)$. Thanks to what we said before $\varphi_{\cal O}^*(x\otimes 1)$ in $H^1_{\mathrm{dR}}(X_K)$ is such 
that 
$N(\varphi_{\mathcal{O}}^*(x\otimes 1))=0$; since $H^1_{\mathrm{dR}}(X_K)$ is a $2$-dimensional $K$-vector space, then  
$ \mathrm{Im}(N)=\mathrm{Ker}(N)$, hence $\varphi_{\cal O}^*(x\otimes 1)\in \mathrm{Im}(N)$. This means that in this case the 
hypothesis of the theorem 
\ref{VIP} are satisfied. \\
Every element in $H^1_{\mathrm{rig}}(X_k)$ corresponds to an extension of the trivial $F$-isocrystal by itself 
(proposition 1.3.1 of \cite{ChLeS}), hence the element $x$ corresponds to the following exact sequence
$$0\rightarrow \mathcal{O}\rightarrow E \rightarrow \mathcal{O}\rightarrow 0.$$
As before we consider $\varphi_{\cal O}^*(x\otimes 1) \in H^1_{\mathrm{dR}}(X_K)$ and the exact sequence of modules with connections induced 
by the one above:
$$0\rightarrow (\mathcal{O}_{X_K},d)\rightarrow(\mathcal{E},\nabla)_K\rightarrow (\mathcal{O}_{X_K},d)\rightarrow 0.$$
We suppose from now on that  $\mathrm{ord}_{\pi}q=3.$ 
Then the graph associate to 
$X$ is a triangle with vertices $I,II,III$  and edges $[I,II], [II,III], [I,III]$.\\

The element  $\varphi_{\cal O}^*(x\otimes 1)$, as hypercocycle, can be written as $(0_v, g_e)$ with $g_e \in H^0(X_e)$; 
in particular $d(g_e)=0$, so $g_e\in K$. Moreover since $E$ is an $F$-isocrystal, the class $x$ is 
fixed by the Frobenius of $H^1_{\mathrm{rig}}(X_k)$ (\cite{ChLeS} prop 3.2.1), in particular we can take 
$g_{e}\in \mathbb{Q}_p$ for every $e$.\\
% and we can still find an element $x$ which corresponds to a non-trivial 
%extension of $\mathcal{O}_X$ by $\mathcal{O}_X$.\\ 
The $\mathcal{O}_{X_K}$-module $\mathcal{E}_K$ is locally free: on $X_v$ it has a basis given by $e_{1,v}, e_{2,v}$ and 
on $X_w$ it has a basis given by $e_{1,w}, e_{2,w}.$ If on $X_e$ we choose $e_{1,v}, e_{2,v}$ as basis, then the 
changing basis matrix is given by 
\begin{displaymath} \left(\begin{array}{cc}
1 & g_e \\0&1 \end{array}\right) \end{displaymath}
and the connection on $X_e$ is given by the direct sum of the two trivial connections.\\
Now we consider $(\omega_v, f_e)\in H^1_{\mathrm{dR}}(X_K, (\mathcal{E},\nabla)_K)$, then
$$\omega_v=h_{1,v}e_{1,v}+h_{2,v}e_{2,v},$$
$$\omega_w=h_{1,w}e_{1,w}+h_{2,w}e_{2,w},$$
$$\omega_{w_{|X_e}}=(h_{1,w}+g_eh_{2,w})e_{1,v}+h_{2,w}e_{2,w},$$
with $h_{1,v}$ and $h_{2,v}$ elements of $\Omega^1_{X_v}$ and $h_{1,w}$ and $h_{2,w}$ elements of $\Omega^1_{X_w}.$ 
Let us suppose now that $(\omega_v,f_e)\in \mathrm{Kernel}(N_{\mathcal{E}}),$ which means that 
$$N_{\mathcal{E}}(\omega_v,f_e)=(0,Res_{|X_{e}}\omega_v)=0 \,\,\,\mathrm{in} \,\,\,H^1_{\mathrm{dR}}(X_K, (\mathcal{E},\nabla)_K),$$
but as the map from $ H^1(Gr, \mathcal{E}_K)$ to $H^1_{\mathrm{dR}}(X_K, (\mathcal{E},\nabla)_K)$ is injective, we have that
 $Res_{|X_{e}}\omega_v$ is zero as element of $ H^1(Gr, \mathcal{E}_K)$. 

Let us write the system which tells us that an element $a_e=(a_e^1, a^2_e) \in H^1(Gr, 
\mathcal{E}_K)=\frac{\oplus_e H^0_{\mathrm{dR}}(X_e, (\mathcal{E}, \nabla)_K)}{\oplus_vH^0_{\mathrm{dR}}(X_v, 
(\mathcal{E}, \nabla)_K)}$, written in coordinates with respect to the basis $e_{v,1}, e_{v,2}$, is zero:
\begin{displaymath}
\left\{ \begin{array}{l}
a_{[I,II]}^1=a_I^1-a_{II}^1-g_{[I,II]}a_{II}^2\\ 
a_{[I,II]}^2=a_I^2-a_{II}^2 \\ 
\end{array} \right. 
 \end{displaymath}
 \begin{displaymath}
\left\{ \begin{array}{l}
a_{[II,III]}^1=a_{II}^1-a_{III}^1-g_{[II,III]}a_{III}^2\\ 
a_{[II,III]}^2=a_{II}^2-a_{III}^2 \\ 
\end{array} \right. 
 \end{displaymath}
\begin{displaymath}
\left\{ \begin{array}{l}
a_{[I,III]}^1=a_I^1-a_{III}^1-g_{[I,III]}a_{III}^2\\ 
a_{[I,III]}^2=a_I^2-a_{III}^2 \\ 
\end{array} \right. 
 \end{displaymath}
Moreover from the Gysin sequence ( \cite{ChLeS} proposition 2.1.4), applied to every component $C_v$ of $X_k$ 
(on every wide open 
$X_v$ $(\mathcal{E},\nabla)_K$ is the direct sum of two copies of $(\mathcal{O}_{X_K},d)$), we can derive the following 
equations:
\begin{displaymath}
\left\{ \begin{array}{l}
a_{[I,II]}^1+a^1_{[I,III]}=0\\ 
a_{[II,III]}^1+a^1_{[II,I]}=0\\
a_{[III,I]}^1+a^1_{[III,II]}=0 
\end{array} \right. 
 \end{displaymath}
\begin{displaymath}
\left\{ \begin{array}{l}
a_{[I,II]}^2+a^2_{[I,III]}=0\\ 
a_{[II,III]}^2+a^2_{[II,I]}=0\\
a_{[III,I]}^2+a^2_{[III,II]}=0 
\end{array} \right. 
 \end{displaymath}
Putting together the previous equations and writing a linear system in terms of the $a_v$'s, we find the following 
matrix
 \begin{displaymath} A=\left(\begin{array}{cccccc}
2 & 0 & -1 & -g_{[I,II]} & -1& -g_{[I,III]} \\ 
0 & 2 & 0 & -1 & 0 & -1\\
-1 & -g_{[II,I]} & 2 & 0 & -1 & -g_{[II,III]}\\
0 & -1& 0 & 2 & 0 & -1\\
-1 & -g_{[III,I]} & -1 & -g_{[III, II]} & 2 & 0 \\
0 & -1 & 0 & -1 & 0 & 2
 \end{array}\right) \end{displaymath}
where $g_{[I,II]}=-g_{[II,I]}$, $g_{[II,III]}=-g_{[III,II]}$ and $g_{[I,III]}=-g_{[III,I]}.$ The matrix $A$ has determinant equal to 
zero and dimension of the rank equal to 4.
Two generators of the Kernel are the following vectors:
$$K_1=(1,0,1,0,1,0)$$
$$K_2=(\frac{1}{3}g_{[I,II]}+\frac{2}{3}g_{[I,III]}+\frac{1}{3}g_{[II,III]},1,-\frac{1}{3}g_{[I,II]}+\frac{1}{3}g_{[I,III]}+
\frac{2}{3}g_{[II,III]},1,0,1).$$
If we now write $K_1$ and $K_2$ as elements of $H^1(Gr,\mathcal{E}_K)$, i.e. as elements of 
$\oplus_e H^0_{\mathrm{dR}}(X_e, (\mathcal{E},\nabla)_K)$, we find the following vectors:
$$H_1=(0,0,0,0,0,0),$$
$$H_2=(-\frac{1}{3}g_{[I,II]}-\frac{1}{3}g_{[II,III]}+\frac{1}{3}g_{[I,III]}, 0, -\frac{1}{3}g_{[I,II]}-\frac{1}{3}g_{[II,III]}+\frac{1}{3}g_{[I,III]}, 0, \frac{1}{3}g_{[I,II]}+\frac{1}{3}g_{[II,III]}-\frac{1}{3}g_{[I,III]},0).$$
These computations show that the Kernel of $N_{\mathcal{E}}$ consists of the $(\omega_v, f_e)\in H^1_{\mathrm{dR}}(X_K, 
(\mathcal{E}, \nabla)_K)$ such that $Res_{|X_e} \omega_v$ equals $H_1$ or $H_2$. The elements 
$(\omega_v, f_e)$ of $H^1_{\mathrm{dR}}(X_K, (\mathcal{E}, \nabla)_K)$ which are such that 
$Res_{|X_e}\omega_v= H_1$ are the elements that come from $H^1_{\mathrm{rig}}(X_k, E)\otimes K$. \\
Let us consider now the subvector space 
$$V=\{(\omega_v, f_e)| Res_{|X_e}\omega_v=t H_2, \mathrm{with} \,t\in K\}$$ 
Clearly the elements of $\varphi_{\cal E}^*(H^1_{\mathrm{rig}}(X_k, E)\otimes K)$ are contained in $V$ and one can see that 
$V/\varphi_{\cal E}^*(H^1_{\mathrm{rig}}(X_k, E)\otimes K)$ is a $1$-dimensional vector space, in fact two elements in $V$ are multiples 
one of the other modulo an element of $\varphi_{\cal E}^*(H^1_{\mathrm{rig}}(X_k, E)\otimes K)$.

Dip.Matematica Univ.Padova, chiarbru@math.unipd.it,
Dept.Math. UC Berkeley coleman@math.berkeley.edu
Math. Institute, Univ.Tokyo proietto@ms.u-tokyo.ac.jp
Dept.Math. Concordia, Dip.Matematica Univ.Padova iovita@math.unipd.it

\end{document}